\documentclass[a4paper,twoside,10pt]{article}
\usepackage{ucs}
\usepackage[utf8x]{inputenc}
\usepackage[T1]{fontenc}
\usepackage[english]{babel}
\usepackage{subfigure}
\usepackage{amsmath, amsthm, amsfonts, amssymb}
\usepackage{latexsym}
\usepackage{color}
\usepackage{stmaryrd}
\usepackage{endnotes}
\usepackage{wrapfig}
\usepackage{fancyhdr,lastpage}
\usepackage{ccaption}
\usepackage{enumitem}
\usepackage{xspace}
\usepackage{ifthen}
\usepackage{xkeyval}
\usepackage{remreset}
\usepackage{xcolor}
\usepackage[marginpar]{todo}
\usepackage{calc}
\usepackage{multirow}
\usepackage{graphicx}

\makeatletter

\pagestyle{fancy} 


\fancyhead{}
\fancyhead[LE]{\thepage}
\fancyhead[RE]{\@title}
\fancyhead[LO]{\nouppercase{\leftmark}}
\fancyhead[RO]{\thepage}
\fancyfoot{}
\fancypagestyle{plain}{%
	\fancyhf{} %
	\fancyfoot{}%
}



\let\orig@enumerate =\enumerate
\renewenvironment{enumerate}{\orig@enumerate [label=(\roman{*}), ref=(\roman{*})]}
{\endlist}


\definecolor{grispuce}{gray}{0.7}


\newcommand{\resp}{respectively \xspace}
\newcommand{\oas}{$\omega$-almost surely ($\omega$-as)\xspace \renewcommand{\oas}{$\omega$-as\xspace}}
\newcommand{\oeb}{$\omega$-essentially bounded  ($\omega$-eb)\xspace \renewcommand{\oeb}{$\omega$-eb\xspace}}

\newtheorem{defi}{Definition}[section]
\newtheorem{lemm}[defi]{Lemma}
\newtheorem{prop}[defi]{Proposition}
\newtheorem{theo}[defi]{Theorem}
\newtheorem{coro}[defi]{Corollary}

\@addtoreset{defi}{section}
\renewcommand{\thesection}{%
	\arabic{section}%
}
\renewcommand{\thesubsection}{%
	\ifnum \value{section}>0
		\thesection.%
	\else%
	\fi%
	\arabic{subsection}%
}
\renewcommand{\thedefi}{%
	\ifnum \value{section}>0
		\thesection.%
	\else%
	\fi%
	\arabic{defi}%
}

\newcommand{\rem}{\paragraph{Remark :}}
\newcommand{\rems}{\paragraph{Remarks :}}

\newcommand{\nota}{\paragraph{Notation :}}

\newcommand{\N}{\mathbf{N}}

\newcommand{\Q}{\mathbf{Q}}
\newcommand{\R}{\mathbf{R}}
\newcommand{\C}{\mathbf{C}}
\renewcommand{\H}{\mathbf{H}}

\newcommand{\set}[2]{%
	\left\{\vphantom{#2}#1\ \right| \left. \vphantom{#1}#2\right\}%
}

\define@choicekey*{planhyp}{corps}[\val \nr]{r,c,h,}[]{
	\ifcase\nr 
		\def\ph@corps{\R}
	\or
		\def\ph@corps{\C}
	\or
		\def\ph@corps{\H}
	\or
		\def\ph@corps{}
	\fi
}
\presetkeys{planhyp}{corps}{}
\newcommand{\HP}[2][]{
	\begingroup
		\setkeys{planhyp}{#1}
		\ifthenelse{\equal{\ph@corps}{}}
			{\mathbf{H}_{#2}}
			{\mathbf{H}_{#2}\left(\ph@corps \right)}
	\endgroup
}
\newcommand{\gro}[3]{\left< #1,#2\right>_{#3}}
\newcommand*{\dist}[3][]{
	\ifthenelse{\equal{#1}{}}
		{\left| #2- #3 \right|}
		{
			\ifthenelse{\equal{#1}{SC}}
			{\left\| #2- #3 \right\|}
			{\left| #2- #3 \right|_{#1}}
		}
}
\newcommand*{\distV}[1][]{
	\ifthenelse{\equal{#1}{}}
		{\left| \ . \ \right|}
		{
			\ifthenelse{\equal{#1}{SC}}
			{\left\| \ . \ \right\|}
			{\left| \ . \ \right|_{#1}}
		}
}

\newcommand*{\geo}[3][]{
	\ifthenelse{\equal{#1}{}}
		{\left[ #2, #3 \right]}
		{\left[ #2, #3 \right]_{#1}}
}

\newcommand*{\triang}[4][]{
	\ifthenelse{\equal{#1}{}}
		{\left[ #2, #3, #4 \right]}
		{\left[ #2, #3, #4 \right]_{#1}}
}

\newcommand{\ball}[1]{
	B\left( #1\right)
}

\newcommand{\ann}[2]{
	A\left( #1,#2\right)
}


\define@boolkey{longueurtrans}{stable}[true]{}
\define@key{longueurtrans}{espace}[]{\def \lt@espace{#1}}
\presetkeys{longueurtrans}{stable=false,espace}{}
\newcommand*{\len}[2][]{
	\begingroup
		\setkeys{longueurtrans}{#1}
		\ifKV@longueurtrans@stable {
			\ifthenelse{\equal{\lt@espace}{}}
				{\left[ {#2}\right]^{\infty}}
				{\left[ {#2}\right]^{\infty}_{\lt@espace}}
		}
		\else {
			\ifthenelse{\equal{\lt@espace}{}}
				{\left[ {#2}\right]}
				{\left[ {#2}\right]_{\lt@espace}}
		}
		\fi
	\endgroup
}



\newcommand{\rips}[3][]{
	\ifthenelse{\equal{#1}{}}
		{P_{#2}\left( #3\right)}
		{P_{#2}^{(#1)}\left( #3\right)}
}

\newcommand{\card}[1]{
	\left| #1\right|
}


\newcommand{\sdp}[3][]{
	\ifthenelse{\equal{#1}{}}
		{#2 \rtimes #3}
		{#2 \rtimes _{#1} #3}
}

\newcommand{\free}[1]{\mathbf F_{#1}}

\newcommand{\burn}[2]{\mathbf B _{#1}(#2)}




\newcommand{\fantomB}{\vphantom{\Big\vert}\!}
\renewcommand{\epsilon}{\varepsilon}
\renewcommand{\phi}{\varphi}
\renewcommand{\leq}{\leqslant}
\renewcommand{\geq}{\geqslant}



\newcommand{\makebiblio} {
	\bibliography{/Users/coulonr/Maths/Articles/bibliography} 
	\bibliographystyle{abbrv} 
}

\makeatother
\begin{document}

\title{Growth of periodic quotients of hyperbolic groups}
\author{R\'emi Coulon}

\maketitle

\begin{abstract}
	Let $G$ be a non-elementary torsion-free hyperbolic group. 
	We prove that the exponential growth rate of the periodic quotient $G/G^n$ tends to the one of $G$ as $n$ odd approaches infinity.
	Moreover we provide an estimate at which the convergence is taking place.
\end{abstract}
	
\tableofcontents

\section*{Introduction}

\paragraph{} A group $G$ is said to have finite exponent $n$ if for every $g \in G$, $g^n = 1$.
At the beginning of the 20\textsuperscript{th} century, W.~Burnside settled the following problem (now known as the \emph{Bounded Burnside Problem}) \cite{Bur02}.
Is  a finitely generated group with finite exponent necessary finite?
In order to study this question the natural object to look at is the free Burnside group of rank $k$ and exponent $n$ denoted by $\burn kn$.
It is the quotient of the free group of rank $k$ denoted by $\free k$ by the (normal) subgroup $\free k^n$ generated by the $n$-th power of all elements of $\free k$.
It is the largest group of rank $k$ and exponent $n$.

\paragraph{} For a long time it was only established that $\burn kn$ was finite for some small exponents ($n=2$ \cite{Bur02}, $n=3$ \cite{Bur02,LevWae33}, $n=4$ \cite{San40} and $n=6$ \cite{Hal57}). 
The finiteness of $\burn 25$ is still open.
In 1968, P.S.~Novikov and S.I.~Adian achieved a breakthrough.
In a series of three articles \cite{NovAdj68c}, they provided the first examples of infinite free Burnside groups. 
More precisely they proved the following result.
If $k \geq 2$ and $n$ is an odd integer larger than 4381, then $\burn kn$ is infinite.
Their result has been improved in many directions.
In particular S.V.~Ivanov \cite{Iva94} and I.G.~Lysenok \cite{Lys96} solved the case of even exponents.
Since free Burnside groups of sufficiently large exponents are infinite a natural question is how ``big'' they are.
This can be measured by the exponential growth rate.

\paragraph{} Given a finitely generated group $G$ endowed with the word metric with respect to some finite generating set of $G$, its \emph{(exponential) growth rate} is defined to be
\begin{displaymath}
	\lambda = \lim_{r \rightarrow \infty} \sqrt[r]{\card{\ball r}},
\end{displaymath}
where $\card{\ball r}$ denotes the cardinal of the ball of radius $r$ of $G$.
If $\lambda>1$ one says that $G$ has \emph{exponential growth}.
($\lambda$ depends on the generating set, however having exponential growth is a property of the group $G$).
Furthermore if for every generating set the corresponding growth rate is uniformly bounded away from 1 then $G$ has \emph{uniform exponential growth}.

\paragraph{} In his book \cite{Adi79}, S.I.~Adian proved that free Burnside groups of sufficiently large odd exponents are not only infinite but also exponentially growing.
Latter D.~Osin showed that they are uniformly non-amenable, and therefore they have uniform exponential growth \cite{Osi07}.
An other approach can be found in \cite{Atabekyan:2009ih}.

\paragraph{} In 1991, using a diagrammatical description of graded  small cancellation theory, A.Y.~Ol'shanski\u\i\ proved an analogue for hyperbolic groups of the Novikov-Adian Theorem \cite{Olc91}.
\begin{theo}[Ol'shanski\u\i\,{\cite{Olc91}}]
\label{res: Olc periodic quotient hyp groups}
	Let $G$ be a non-elementary, torsion-free hyperbolic group.
	There exists a critical exponent $n(G)$ such that for all odd integers $n \geq n(G)$, the quotient $G/G^n$ is infinite.
\end{theo}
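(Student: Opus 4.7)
The plan is to realize $G/G^n$ as the direct limit of an iterated small cancellation construction, following Ol'shanski\u\i's graded approach. I would set $G_0 = G$ and define inductively a sequence of quotients $G_0 \twoheadrightarrow G_1 \twoheadrightarrow G_2 \twoheadrightarrow \cdots$, where $G_{i+1}$ is obtained from $G_i$ by adjoining the relations $\{r^n : r \in R_i\}$ and $R_i$ is a suitably chosen set of representatives of conjugacy classes of infinite-order elements of $G_i$ whose translation length lies in a prescribed ``layer'' $[\rho_i, \rho_{i+1})$, with $\rho_i$ growing geometrically. The quotient $G/G^n$ is then the direct limit of this tower, since every element eventually appears in some layer and its $n$-th power is killed there.

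The technical core of the proof is to show that each step is a graded small cancellation quotient of a hyperbolic group, so that the Cartan--Hadamard-type theorem of Ol'shanski\u\i\ applies. More precisely I would verify that, for $n$ odd and sufficiently large, the family $\{r^n : r \in R_i\}$ satisfies a $C'(\lambda)$-type small cancellation condition over $G_i$ with $\lambda$ arbitrarily small as $n$ grows. This requires two inputs at each level: first, a lower bound on the injectivity radius of $G_i$ (so that relators are genuinely long compared to the hyperbolicity constant), and second, a uniform upper bound on the hyperbolicity constant $\delta_i$. The odd exponent hypothesis enters here to rule out the pathological involutive cancellations that obstruct the even case.

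The main obstacle, and the heart of Ol'shanski\u\i's argument, is maintaining \emph{uniform} control on $\delta_i$ and on the injectivity radius throughout the induction: one must show that $\delta_{i+1}$ stays bounded above by a constant independent of $i$, and that the translation lengths of the ``new'' infinite-order elements of $G_{i+1}$ do not collapse below $\rho_{i+1}$. This is done by a van Kampen diagram analysis of reduced disk diagrams over $G_{i+1}$: any short relation in $G_{i+1}$ would produce a diagram whose faces are labeled by powers $r^n$, and the small cancellation condition forces such a diagram to contain a face traversed by the boundary along more than half its perimeter, contradicting the choice of $R_i$ as minimal-length representatives. This inductive step is where the constant $n(G)$ is extracted.

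Once the inductive construction is in place, infiniteness of $G/G^n$ follows easily. At each level $i$, the set $R_i$ is non-empty (it contains the shortest infinite-order element not yet killed), and its image in $G/G^n$ is a non-trivial element of order exactly $n$. Because different layers are separated by the geometric gaps $[\rho_i, \rho_{i+1})$ and the injectivity radius is controlled from below, these elements project to pairwise distinct elements in $G/G^n$, producing infinitely many elements and hence $|G/G^n| = \infty$.
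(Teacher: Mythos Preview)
The paper does not give its own proof of this statement; it is quoted from Ol'shanski\u\i\ \cite{Olc91}. What the paper does record (in the introduction and again in Section~\ref{sec: periodic quotients}) is the single fact that drives the result: there exist constants $L$, $\epsilon$, $n(G)$ such that for every odd $n \geq n(G)$ and every $m \leq \epsilon n$, the set of $(L,m)$-aperiodic elements of $G$ embeds into $G/G^n$ under the natural projection. Infiniteness of $G/G^n$ is then immediate from the infiniteness of the aperiodic set.

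Your sketch of the graded small cancellation tower $G = G_0 \twoheadrightarrow G_1 \twoheadrightarrow \cdots$ with uniform control on $\delta_i$ and the injectivity radius is indeed the machinery Ol'shanski\u\i\ uses, so the architecture is right. The discrepancy is in the last paragraph. You conclude infiniteness by picking an element $r \in R_i$ from each layer and asserting that these project to pairwise distinct elements of $G/G^n$; but nothing in your outline prevents $r \in R_i$ and $r' \in R_j$ from being identified at some later stage $G_k$, so this step is unjustified as written. The argument Ol'shanski\u\i\ actually runs --- and the one the present paper extracts and relies on throughout --- goes the other way: one exhibits an explicit infinite subset of $G$ (the $(L,m)$-aperiodic elements) and proves, via the diagram analysis you describe, that two distinct aperiodic elements stay distinct in \emph{every} $G_i$, hence in the direct limit. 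That injectivity statement is the reusable output of the construction, and it is exactly what the present paper needs for its growth estimates (Theorem~\ref{res: aperiodic one-to-one}).
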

Non-elementary hyperbolic groups are known to have uniform exponential growth \cite{Kou98}.
On the other hand hyperbolic groups are growth tight \cite{ArzLys02}.
This means that, given such a group $G$ and a finite generating set $A$, for any infinite normal subgroup $N$ of $G$, the exponential 
growth rate of $G/N$ with respect to the natural image of $A$ is strictly less 
than the exponential growth rate of $G$ with respect to $A$.
Therefore we were wondering what the growth rate of the periodic quotients $G/G^n$ could be.
In particular is there a gap between the respective growth rates of $G$ and $G/G^n$?
The following theorem answers this question negatively: the growth rate of $G/G^n$ converges to the one of $G$ as $n$ odd approaches infinity.
Moreover we provide an estimate for the rate at which this convergence is taking place.

\begin{theo}
\label{res: main theorem}
	Let $G$ be a non-elementary, torsion-free hyperbolic group and $\lambda$ its exponential growth rate with respect to a finite generating set $A$ of $G$.
	There exists a positive number $\kappa$ such that for sufficiently large odd exponents $n$ the exponential growth rate of $G/G^n$ with respect to the image of $A$ is larger than 
	\begin{displaymath}
		\lambda \left(1- \frac \kappa n\right).
	\end{displaymath}

\end{theo}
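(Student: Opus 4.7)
The plan is to produce, for every sufficiently large radius $r$, a subset $T \subseteq G$ of cardinality at least roughly $\lambda^r(1-\kappa/n)^r$ that injects into $G/G^n$. Extracting growth rates then yields the theorem. Equivalently, I would bound the maximum cardinality $P(r)$ of a fiber of the projection $B_G(r) \to G/G^n$ and show that $P(r)$ grows at rate at most $(1+\kappa'/n)$; since $|B_G(r)|$ grows at rate $\lambda$, this gives the lower bound for $G/G^n$.

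The structural input I would use is the description of $G/G^n$ as a limit of graded small cancellation quotients, in the spirit of Ol'shanski\u\i's construction (or the more geometric reformulations of Delzant--Gromov and the author). This provides a scale $L$ proportional to $n\delta$ with the following property: the kernel $G^n$ is normally generated by $n$-th powers $h^n$ of elements $h$ whose translation length is on the order of $L/n$, and the family of conjugates of these relators satisfies a strong small cancellation condition. In particular, any nontrivial $w \in G^n$ admits a van Kampen-type diagram forcing $w$ to contain a long subword, of length at least of order $L \sim cn$, that reads essentially as an $n$-th power of a short element.

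To control the fibers, suppose $g_1, g_2 \in B_G(r)$ map to the same class. Then $w = g_1^{-1} g_2$ lies in $G^n$ and has length $\leq 2r$. By the small cancellation structure, either $w = 1$ or $w$ exhibits at some position a block of length at least $cn$ that is nearly an $n$-th power of a short element. Fixing $g_1$, the number of admissible $g_2$ is thus dominated by the number of positions where such a block can be inserted times the number of short elements one can take an $n$-th power of; a subword-avoidance counting argument, analogous to those used for estimating the number of length-$r$ words containing a prescribed factor of length $n$, gives $P(r) \leq (1+\kappa'/n)^r$ up to sub-exponential factors.

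The main obstacle is the quantitative small cancellation step. One needs the dependence between the relator length, the exponent $n$, and the hyperbolicity constant of the successive quotients to be controlled \emph{linearly} in $n$, so that the resulting correction factor is of the sharp form $1-\kappa/n$ and not $1-\kappa/\log n$ or $1-\kappa/\sqrt{n}$. This forces one to keep track, through the iterative cone-off/rotating-family construction producing $G/G^n$, of the fact that $L$ can be taken proportional to $n\delta_G$ uniformly in the stage of the construction, which is precisely the regime in which Ol'shanski\u\i-type theorems apply.
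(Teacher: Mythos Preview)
The paper does not bound fiber cardinalities. It instead constructs a set $F_{L,m}\subset G$ of $(L,m)$-aperiodic elements (with $m=\lfloor\epsilon n\rfloor$) and proves its growth rate is at least $\lambda(1-a/m)$; Ol'shanski\u\i's injectivity theorem then embeds this set into $G/G^n$, giving the bound directly. The substantive work is the growth estimate for $F_{L,m}$: since a general hyperbolic group is not uniquely geodesic, one cannot simply count reduced words avoiding powers as in the free case, and the paper develops machinery of essential cone types and lexicographic geodesics (Sections~2 and~3) to produce an auxiliary set $F$ on which an Adian-style recursive count goes through.

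Your fiber bound $P(r)\leq(1+\kappa'/n)^r$ is not established by the sketch you give, and this is a genuine gap. Knowing that every nontrivial $w\in G^n$ contains a power block of length $\sim cn$ does not bound the \emph{number} of such $w$ in a ball of radius $2r$ by anything close to $(1+\kappa'/n)^r$: the set of length-$r$ elements containing \emph{some} large power still has full growth rate $\lambda$, not a rate near $1$. Your count ``positions times short elements'' captures only those $g_2$ differing from $g_1$ by a single relator insertion, whereas two elements of a coset of $G^n$ can differ by an arbitrary product of conjugates of relators. One could attempt an iterated Dehn reduction (remove one relator piece, shorten $w$ by $\sim n$, recurse on the shorter element of $G^n$), but then the branching factor at each step must be controlled uniformly, and carrying this out with the correct linear dependence on $n$ in a non-free hyperbolic group runs into exactly the geodesic-ambiguity issues that force the paper's cone-type construction; none of this is addressed in your outline.
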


In the case of free Burnside groups we even have a much more accurate estimate.

\begin{theo}
\label{res: main theorem - free}
	Let $k \geq 2$.
	Let $A$ be a free generating set of $\free k$ (i.e. with exactly $k$ elements).
	There exists a positive number $\kappa$ such that for sufficiently large odd exponents $n$ the exponential growth rate of $\burn kn$ with respect of the image of $A$  is lager than 
	\begin{displaymath}
		(2k-1) \left(1- \frac \kappa{(2k-1)^{n/2}}\right).
	\end{displaymath}
\end{theo}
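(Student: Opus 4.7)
The plan is to exhibit a large sublanguage $L_n \subset \free k$ of reduced words that projects injectively into $\burn k n$, and then bound its exponential growth rate from below.

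\textbf{Step 1 (relations are long, odd-power reductions dominate).} For odd $n$ sufficiently large, Ol'shanski\u\i's graded small cancellation theory (or, equivalently, the cone-off construction developed in the earlier sections of this paper) yields a Dehn-style property: any non-trivial reduced word $w \in \free k$ that represents the identity of $\burn k n$ must contain a subword of length at least $\lceil (n+1)/2 \rceil$ that is a piece of some defining relator $v^n$. The most restrictive case is $|v|=1$, which forces $w$ to contain a factor $g^{(n+1)/2}$ for some $g\in A\cup A^{-1}$; relators with $|v|\geq 2$ produce only longer forbidden subwords and will be negligible for the counting.

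\textbf{Step 2 (a well-chosen sublanguage).} Define $L_n\subseteq\free k$ to be the set of reduced words $w$ having no interior factor $g^{(n+1)/2}$ and no prefix or suffix of the form $g^{\lceil (n+1)/4 \rceil}$, for each $g\in A\cup A^{-1}$. I then show that the projection $L_n\to\burn k n$ is injective: if $u,v\in L_n$ with $\bar u=\bar v$ in $\burn k n$ and $u\neq v$ in $\free k$, write $uv^{-1} = u_1 v_2$ after free cancellation. The reduced word $u_1 v_2$ is a non-trivial element of $\free k^n$, so by Step~1 it contains a factor $g^{(n+1)/2}$. This factor sits either entirely inside $u_1$ (hence inside $u$), entirely inside $v_2$ (hence inside $v$), or straddles the cut point with $g^s$ ending $u_1$ and $g^t$ starting $v_2$, $s+t=(n+1)/2$. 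The first two cases contradict the interior condition, and the straddling case is ruled out by the quarter-length boundary condition, which forces $s,t<\lceil (n+1)/4\rceil$ and hence $s+t<(n+1)/2$. Injectivity yields $\card{B_r(\burn k n)} \geq \card{\{w\in L_n \,:\, |w|\leq r\}}$ for every $r\geq 0$.

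\textbf{Step 3 (counting in $L_n$).} The language $L_n$ is regular and is obtained from the reduced-word tree of $\free k$ (exponential growth rate $2k-1$) by forbidding $O(k)$ factors of length $\Theta(n)$. A transfer-matrix (or Goulden--Jackson) computation gives
\begin{displaymath}
	\lambda(L_n)\ \geq\ (2k-1)-C\,(2k-1)^{1-(n+1)/2}\ \geq\ (2k-1)\left(1-\frac{\kappa}{(2k-1)^{n/2}}\right)
\end{displaymath}
for positive constants $C,\kappa$ depending only on $k$. Combined with Step~2, this produces $\lambda(\burn k n,A) \geq \lambda(L_n)$, which is the claimed lower bound.

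The main obstacle is the boundary-cancellation case in Step~2: controlling the situation in which free reduction of $uv^{-1}$ creates a long monochromatic power $g^{(n+1)/2}$ straddling the gluing region, even though neither $u$ nor $v$ contains such a factor internally. The quarter-length prefix/suffix conditions are introduced precisely to preclude this; because they add only boundary restrictions of comparable length $\sim n/4$ (and are not repeated along the word), they influence $\lambda(L_n)$ only through the constant $\kappa$ and preserve the exponential scale $(2k-1)^{n/2}$. The appearance of the square-root exponent $n/2$ in the final bound is a direct consequence of the odd-power identity $g^{(n+1)/2}=g^{-(n-1)/2}$ in $\burn k n$, which is the shortest shortening that can be performed on a reduced word.
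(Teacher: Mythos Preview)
There is a genuine gap in Step~2: the language $L_n$ you define does \emph{not} inject into $\burn kn$. Your $L_n$ forbids only single-letter powers $g^{(n+1)/2}$ (plus boundary conditions), but the relators of $\burn kn$ are $v^n$ for \emph{all} reduced $v$, not just generators. Concretely, take distinct generators $a,b\in A$: the word $(ab)^n$ is reduced, contains no subword $g^m$ with $m\geq 2$ for any $g\in A\cup A^{-1}$, and its prefixes and suffixes are alternating, so $(ab)^n\in L_n$; yet $(ab)^n=1$ in $\burn kn$. Thus $(ab)^n$ and the empty word are distinct elements of $L_n$ with the same image. Your Step~1 only asserts that a trivial reduced word contains a long piece of \emph{some} relator $v^n$; when $|v|\geq 2$ this piece is a high power of (a cyclic shift of) $v$, not a single-letter power, so the case analysis in Step~2 never applies. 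The remark that longer relators are ``negligible for the counting'' conflates two separate issues: they may be negligible in a growth estimate, but they are \emph{not} negligible for establishing injectivity.

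The paper's argument avoids this by using Adian's theorem (Theorem~\ref{res: aperiodic one-to-one - free case}): a reduced word containing no subword $u^m$ for \emph{any} $u$, with $m=\lfloor n/2\rfloor-\eta$, is non-trivial in $\burn kn$. One must therefore forbid \emph{all} $m$-th powers, which yields the set $F_m$ of $m$-aperiodic words. The counting is then genuinely more delicate than a transfer-matrix bound over $O(k)$ forbidden factors: there are infinitely many forbidden patterns $u^m$, one for each primitive $u$, and Proposition~\ref{res: growth F_m - free case} handles this by an inductive inequality summing over the length $j=|u|$, showing that the total loss is still of order $(2k-1)^{-m}$. The exponent $n/2$ in the final bound comes from $m\approx n/2$ in Adian's theorem, not from the single identity $g^{(n+1)/2}=g^{-(n-1)/2}$.
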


\paragraph{} Our proof extends the ideas of S.I.~Adian.
However considering hyperbolic groups instead of free groups makes it much more complicated and requires new tools.
Let us first recall the key argument of Adian's approach. 

\paragraph{Main fact.} Let $v$ be a reduced word representing an element of $\free k$.
If $v$ does not contain a subword of the form $w^{16}$, then $v$ induces a non-trivial element of $\burn kn$ for every odd integer $n \geq 665$.

\paragraph{}In particular, two distinct reduced words which do not contain a 8-th power induce different elements of $\burn kn$.
Therefore, it is sufficient to estimate the growth rate of $F_8$, the set of reduced words without 8-th power.
This is done by induction on the length of the words.
The main steps of this proof are recalled in Section~\ref{sec:avoiding large powers - free case}.

\paragraph{}Consider now an arbitrary non-elementary, torsion-free, hyperbolic group $G$ endowed with the word metric $\distV$.
Following A.Y.~Ol'shanski\u\i, a $(L,m)$-power is an element of $G$ that can be written $uw^nu'$ where $u$ and $u'$ have length at most $L$.
An element $g$ of $G$ is $(L,m)$-aperiodic if it can not be written $g= g_1g_2g_3$ where $|g| = |g_1|+ |g_2|+|g_3|$ and $g_2$ is a $(L,m)$-power.
The proof of Theorem~\ref{res: Olc periodic quotient hyp groups} relies on the following fact.
In \cite{Olc91}, A.Y.~Ol'shanski\u\i\ proved  the existence of constants $L$, $\epsilon$ and $n(G)$, which only depends on $G$ with the following property.
Let $n$ be an odd integer larger than $n(G)$, let $m \leq \epsilon n$.
Then the set of $(L,m)$-aperiodic elements embeds into $G/G^n$.
The infiniteness of $G/G^n$ follows from the one of $(L,m)$-aperiodic elements.
An other approach based on techniques developed by T.~Delzant and M.~Gromov \cite{DelGro08} can be found in \cite{Cou11b}.

\paragraph{}
Hence one way to prove Theorem~\ref{res: main theorem} is to compute the growth rate of the set of $(L,m)$-aperiodic elements.
Instead of reasoning with words we consider geodesic path the Cayley graph $X$ of $G$.
However this definition of $(L,m)$-aperiodic words does not behave well with the operations of extending geodesics or taking subgeodesics.
For instance we would like to have the following fact.
Let $g$ and $g'$ be two element of $G$ such that $g$ lies on a geodesic between 1 and $g$.
If $g$ is $(L,m)$-aperiodic but $g'$ is not, then the $m$-th power in $g'$ can be read ``at the end'' of the geodesic representing $g'$.
Nevertheless, since $X$ is not uniquely geodesic, this statement does not hold: the element $g$ could contain a  $(L+ 4 \delta,m)$-power as illustrated on Figure~\ref{fig: powers pb 1}.

\begin{figure}[ht]
\centering
	\includegraphics{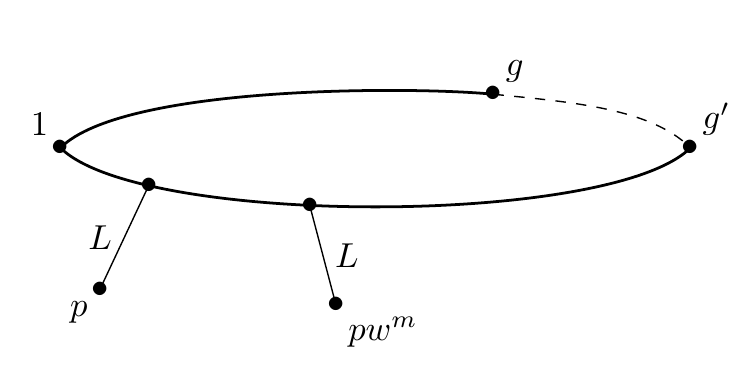}
\caption{Extending aperiodic elements.}
\label{fig: powers pb 1}
\end{figure}

\paragraph{}
To avoid this difficulty we focus on a particular set of geodesics.
We fix an arbitrary order on the generating set $A\cup A^{-1}$.
Thus the set of geodesics inherits of the lexicographical order.
For every $g \in G$, $\sigma_g$ is the smallest (for the lexicographical order) geodesic joining 1 to $g$.
We call such a path a \emph{lexicographic geodesic}.
In particular, if $g' \in G$ lies on $\sigma_g$ then $\sigma_{g'}$ is the subpath of $\sigma_g$ between 1 and $g'$.
Then we adopt the following definition.
An element $g \in G$ contains a $(L,m)$-power if there are $p \in G$ and a non-trivial cyclically reduced element $w \in G$ such that both $p$ and $pw^m$ belong to the $L$-neighborhood of $\sigma_g$.

\paragraph{}This adaptation leads to an other difficulty.
Given an element $g \in G$, we need to be sure that $\sigma_g$ can be extended in ``sufficiently many ways'' in a lexicographic geodesic.
However this could be impossible (see Fig.~\ref{fig: powers pb 2}).
This question is handled in Section~\ref{sec: growth of cone types}.
For every $r$ we construct a subset $F$ of $G$ which, among others, satisfies the following property.
For all $g \in F$ the number of elements $g' \in F$ such that $\sigma_{g'}$ extends $\sigma_g$ by a length $r$ is larger that $\nu \lambda^r$, where $\nu$ is some constant which only depends on $G$ and $A$ and $\lambda$ is the exponential growth rate of $G$.
Our proof uses as a tool the Canon cone types \cite{Can84}.

\begin{figure}[ht]
\centering
	\includegraphics{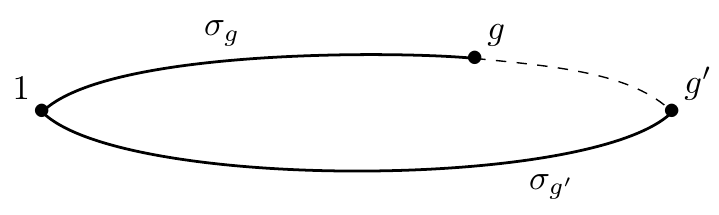}
\caption{Extending lexicographic geodesics.}
\label{fig: powers pb 2}
\end{figure}

\paragraph{}
Finally we prove that the set of $(L,m)$-aperiodic elements of $F$ grows exponentially with a rate larger than $\lambda(1-\kappa/n)$ (see Section~\ref{sec:avoiding large powers}).
Our theorem follows then from Ol'shanski\u\i's work (see Section~\ref{sec: periodic quotients}).

\paragraph{Acknowledgment.}
Part of this work was done during my stay at the \emph{Max-Planck-Institut f\"ur Mathematik}, Bonn, Germany.
I would like to express my gratitude to all faculty and staff from the MPIM for their support and warm hospitality.
Many thanks also goes to A.Y.~Ol'shanski\u\i\ for related conversations.

\section{Hyperbolic geometry}
\label{sec: recall hyperbolic spaces}

\paragraph{}
In this section we fix notations and review some of the standard facts on hyperbolic spaces and hyperbolic groups (in the sense of Gromov).
For more details we refer the reader to the original paper of M.~Gromov \cite{Gro87} or to \cite{CooDelPap90,GhyHar90}.
\paragraph{}
Let $G$ be a group generated by a finite set $A$.
We denote by $X$ the Cayley graph of $G$ with respect to $A$.
The vertices of $X$ are the elements of $G$.
For every $g\in G$ and $a \in A\cup A^{-1}$, $g$ is joined to $ga$ by an edge labeled by $a$.
The group $G$ acts on the left by isometries on $X$.

\paragraph{}
Given two points $x,x' \in X$, $\dist x{x'}$ stands for the distance between them.
The Gromov product of three points $x,y,z \in X$ is defined by
\begin{displaymath}
	\gro xyz = \frac 12 \left\{ \fantomB \dist xz + \dist yz - \dist xy \right\}.
\end{displaymath}
The space $X$ is said to be $\delta$-hyperbolic if for all $x,y,z,t \in X$, 
\begin{displaymath}
	\gro xyt \geq \min \left\{ \gro xzt, \gro zyt \right\} - \delta.
\end{displaymath}

\rem
The constant $\delta$ depends on $A$. 
Nevertheless for a group, being hyperbolic (for some $\delta$) does not depend on the generating set.
In this article we fix once for all the generating set $A$.
Therefore the hyperbolicity constant $\delta$ of $X$ is fixed as well.
Without loss of generality we can assume that $\delta \geq 1$.
More generally, greek letters will represent constants which only depend on $G$ and $A$.
Moreover in the rest of the article we assume that $G$ is torsion-free and non-elementary i.e., non virtually cyclic.

\paragraph{}
As a consequence of hyperbolicity, the geodesic triangles of $X$ are $4\delta$-thin i.e., for every $x,y,z \in X$, for every $p$ (\resp $q$) lying on a geodesic between $x$ and $y$ (\resp between $x$ and $z$), if $\dist xp = \dist xq \leq \gro yzx$ then $\dist pq \leq 4\delta$.

\paragraph{}
Let $g \in G$.
For simplicity of notation, $|g|$ stands for the distance $\dist g1$.
This is exactly the word length of $g$ with respect to $A$.
To measure the action of $g$ on $X$ we define two quantities:
the \emph{translation length} $\len g$ and the \emph{stable translation length} $\len[stable]g$.
\begin{displaymath}
\len g = \inf_{x \in X} \dist {gx}x; \quad \len[stable] g = \lim_{n \rightarrow \infty} \frac 1n \dist{g^nx}x
\end{displaymath}
They are related by the following inequality: $\len[stable] g \leq \len g \leq \len[stable] g + 50 \delta$.
A \emph{cyclically reduced isometry} is an element $g \in G$ such that $\len g = |g|$.
Every conjugacy class of $G$ contains such an isometry.
The set of all non-trivial cyclically reduced isometries is denoted by $C$.

\paragraph{}
If $\len[stable] g >0$ then $g$ is called \emph{hyperbolic}.
It is known that every element of $G$ is either hyperbolic or has  finite order (in the latter case it is said to be \emph{elliptic}).
For our purpose we assumed that $G$ was torsion-free.
Therefore every non-trivial isometry is hyperbolic.
\paragraph{}
In a hyperbolic group, the range of stable translation lengths is discrete:
\begin{theo}[Delzant, {\cite{Del96}}]
\label{res: stable translation length discrete}
	There exists a constant $\tau \in \Q_+^*$ such that for all $g \in G$, $\len[stable]g \in \tau\N$.
\end{theo}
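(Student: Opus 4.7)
The plan is to combine two classical features of hyperbolic group theory: the cyclicity of centralizers in the torsion-free case, and Cannon's finiteness of cone types (equivalently, the regularity of the language of geodesics in $G$).

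First I would reduce to the case of primitive elements. In a torsion-free hyperbolic group the centralizer of every non-trivial element is infinite cyclic, so every $g \neq 1$ has a unique primitive root $h$, i.e.\ $g = h^k$ with $h$ not a proper power, and by multiplicativity of the stable length,
\begin{displaymath}
	\len[stable]{g} = k \len[stable]{h}.
\end{displaymath}
Hence it suffices to exhibit a single $\tau \in \Q_+^*$ such that $\len[stable]{h} \in \tau \N$ for every primitive $h \in G$.

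Next, for each hyperbolic $g \in G$ I would analyse the sequence $n \mapsto |g^n|$. After replacing $g$ by a cyclically reduced conjugate (which does not change $\len[stable]{g}$), one can describe geodesics from $1$ to $g^n$ via a walk in the finite set $\mathcal{S}$ of Cannon cone types. A pigeonhole argument on $\mathcal{S}$ applied to the cone types seen along these geodesics produces integers $p, q \geq 1$ and $n_0$ such that $|g^{n+p}| = |g^n| + q$ for all $n \geq n_0$, whence
\begin{displaymath}
	\len[stable]{g} = \lim_{n\to \infty} \frac{|g^n|}{n} = \frac{q}{p} \in \Q.
\end{displaymath}

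The hard part is to turn this into a \emph{uniform} bound on denominators. One expects the period $p$ to be controlled by $|\mathcal{S}|$ alone, since the eventual period of the cone-type walk associated with the powers of $g$ is at most $|\mathcal{S}|$; letting $N$ be the least common multiple of $1, 2, \ldots, |\mathcal{S}|$ one would conclude $\len[stable]{g} \in \tfrac{1}{N}\Z$. Combined with the fact that $\len[stable]{g} > 0$ for every $g \neq 1$ (by torsion-freeness), the choice $\tau = 1/N$ then yields the theorem. The delicate step is verifying that the period of the increment sequence $|g^{n+1}| - |g^n|$ genuinely divides the period of the underlying cone-type walk, so that the denominator of $\len[stable]{g}$ really is bounded by $|\mathcal{S}|$ independently of $g$; this is where the core of Delzant's argument enters.
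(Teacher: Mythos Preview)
The paper does not prove this theorem; it is quoted from Delzant and used as a black box, so there is no argument in the paper to compare yours against.

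Your outline has the right shape, and you are candid that the decisive step is missing. Let me name the gap more precisely. The phrase ``cone-type walk associated with the powers of $g$'' is ambiguous, and the two natural readings both fail as stated. If you mean: fix a geodesic word $w$ for $g$ and feed $w^n$ through the Cannon automaton, then the walk is undefined, since $w^n$ is in general not geodesic and the automaton rejects it. If instead you mean: record the cone type $T_{g^i}$ of the vertex $g^i$ for $i=1,2,\dots$, then this sequence is indeed eventually periodic with period at most $\card{\mathcal T}$, but the implication $T_{g^m}=T_{g^n}\Rightarrow |g^{m+1}|-|g^m|=|g^{n+1}|-|g^n|$ does not follow, because $g$ need not lie in $T_{g^m}$.

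What does work is to track a richer finite invariant: for large $N$ and each $0\le i\le N$, take a point $p_i$ on a geodesic $\geo 1{g^N}$ nearest to $g^i$ and record the pair consisting of the cone type at $p_i$ together with the offset $p_i^{-1}g^i$. For cyclically reduced $g$ the second coordinate is bounded independently of $i$ and $N$ (the orbit $\{g^i\}$ is a quasi-geodesic fellow-travelling $\geo 1{g^N}$), so this pair takes only boundedly many values; a repetition of the pair genuinely propagates and yields the uniform denominator bound. Carrying this out carefully is exactly the content of the cited result, so your closing sentence is accurate: you have located where the work lies but not done it. (Incidentally, once the uniform denominator bound is established for all $g$, the preliminary reduction to primitive elements is unnecessary.)
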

In particular, for every $g \in G$, hyperbolic we have $\len[stable]g \geq \tau$.

\paragraph{}
Given $r \in \R_+$ and $g \in G$, we denote by $B(g,r)$ the close ball of $G$ of center $g$ and radius $r$ i.e., the set of $h\in G$ such that $\dist gh \leq r$. 
If $g$ is the trivial element we simply write $\ball r$.
For all $e \in \R_+$ the \emph{annulus} $\ann re$ is the set of elements $g \in G$ such that $r-e \leq |g| \leq r$.
If $r\geq 0$ and $e \geq 1$, then $\ann re$ is not empty.

\paragraph{}
If $P$ is a finite subset of $G$, $\card P$ stands for its cardinal.
In order to estimate the size of an infinite subset of $G$ we use the exponential growth rate

\begin{defi}
\label{def: growth}
	Let $P$ be a subset of $G$.
	The \emph{(exponential) growth rate} of $P$ is the quantity
	\begin{displaymath}
		\limsup_{r\rightarrow+\infty} \sqrt[r]{\card{P \cap \ball r}}
	\end{displaymath}
\end{defi}

We denote by $\lambda$ the growth rate of $G$.
Since the map $r \rightarrow \card{\ball r}$ is submultiplicative, $\lambda$ satisfies in fact
\begin{displaymath}
	\lambda = \lim_{r \rightarrow +\infty} \sqrt[r]{\card{\ball r}} = \inf_{r \in \N^*} \sqrt[r]{\card{\ball r}}.
\end{displaymath}
In particular for all $r \in \N$, $\card {\ball r} \geq \lambda^r$.
The next proposition gives an upper bound for $\card {\ball r}$.

\begin{prop}[Coornaert, {\cite{Coo93}}]
\label{res:size of balls}
	There exists $\alpha \geq 1$ which only depends on $G$ and $A$ such that for all $r \in \R_+$,  $\card{\ball r} \leq \alpha \lambda^r$.
\end{prop}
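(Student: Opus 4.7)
The plan is to derive an almost supermultiplicative inequality for the sphere cardinalities $\card{S_r} := \card{\{g \in G : |g| = r\}}$, namely a constant $T \geq 1$ depending only on $G$ and $A$ such that $\card{S_{r+s}} \geq T^{-1}\,\card{S_r}\cdot\card{S_s}$ for all $r, s \in \N$. Once this holds, $\log\card{S_r} - \log T$ is superadditive, so Fekete's lemma yields $\card{S_r} \leq T\lambda^r$ (the relevant asymptotic growth rate being forced equal to $\lambda$ by the obvious bound $\card{S_r} \leq \card{\ball r}$ together with $\card{\ball r}^{1/r} \to \lambda$). Summing the resulting geometric estimate, and using $\lambda > 1$ since $G$ is non-elementary, yields $\card{\ball R} \leq \tfrac{T\lambda}{\lambda-1}\lambda^R$, which is the desired bound with $\alpha := T\lambda/(\lambda-1)$.

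For the supermultiplicative inequality I use a concatenation argument. For each $g \in S_r$, let $E(g,s) := \{h \in S_s : |gh| = r+s\}$ denote the length-$s$ extensions of $g$ producing no cancellation. The map $(g,h) \mapsto gh$, defined on $\bigsqcup_{g\in S_r}\{g\}\times E(g,s)$, lands in $S_{r+s}$ with multiplicity at most $M := \card{\ball{4\delta}}$: if $gh = g'h'$ with $|g|=|g'|=r$ and $|gh|=r+s$, then $g$ and $g'$ both lie at distance $r$ from $1$ along two geodesics sharing the endpoint $gh$, and the $4\delta$-thinness of geodesic triangles forces $\dist g{g'} \leq 4\delta$. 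Consequently $\card{S_r}\cdot\min_{g\in S_r}\card{E(g,s)} \leq M\cdot\card{S_{r+s}}$.

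The main obstacle is then a uniform lower bound $\card{E(g,s)} \geq c\,\card{S_s}$ with $c > 0$ independent of $g$ and $s$. My approach rests on Cannon's finiteness of cone types (which the author himself invokes in Section~\ref{sec: growth of cone types}): $E(g,s)$ depends only on the cone type of $g$, and there are finitely many cone types. Observing that $\card{S_s} = \card{E(1,s)}$ counts the length-$s$ words accepted from the initial state of the cone-type automaton, the required inequality asserts that from every state the count of length-$s$ accepted words is comparable to that from the initial state. Such a uniform comparison should follow from a Perron--Frobenius analysis of the transition matrix of the automaton, provided one verifies that non-elementariness of $G$ forces every cone type to eventually feed into the strongly connected component carrying the maximal Perron eigenvalue $\lambda$. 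Once this uniform $c > 0$ is in hand, setting $T := M/c$ closes the argument.
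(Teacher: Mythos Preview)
The paper does not give its own proof of this proposition: it is simply quoted from Coornaert \cite{Coo93}, whose argument uses Patterson--Sullivan measures on $\partial X$ rather than any combinatorics of cone types. So there is no ``paper's proof'' to compare against, only Coornaert's.

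Your outline has a real gap at exactly the point you flag as the ``main obstacle''. The inequality $\card{E(g,s)}\geq c\,\card{S_s}$ for \emph{every} $g\in S_r$ is in general false. Note that $E(g,s)=T_g\cap S_s$, so what you are asking is that every cone type grows at rate $\lambda$; in the terminology of Section~\ref{sec: growth of cone types} this says every cone type is essential. But the paper does \emph{not} assume this: the whole point of Definition~\ref{def:essential type}, Proposition~\ref{res:strong definition essential type}, Lemma~\ref{res: gu essential gives g essential} and Proposition~\ref{res: growth set of essential elements} is precisely to isolate the essential cone types from the rest and to work only with those. Moreover, by the contrapositive of Lemma~\ref{res: gu essential gives g essential}, a non-essential element has only non-essential descendants, so a non-essential cone type can never ``feed into'' the maximal strongly connected component of the automaton. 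Hence the Perron--Frobenius verification you hope for (``non-elementariness forces every cone type to feed into the $\lambda$-component'') is exactly the unproved assertion that $\mathcal T=\mathcal T_E$, and for generic generating sets of hyperbolic groups this need not hold (dead-end elements, for instance, have finite cone type).

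There is also a second, more subtle issue hiding behind the first: even if you restrict to the ``good'' component, getting $\card{S_r}\leq T\lambda^r$ from the automaton amounts to showing that the pole of the rational growth series at $1/\lambda$ is simple. This is essentially equivalent to the statement you are trying to prove; Coornaert obtains it by a measure-theoretic argument (quasi-conformal density on the boundary and a shadow lemma), not from the automatic structure. If you want to stay on the combinatorial side, you would need to rule out several $\lambda$-components chained together, which again is not automatic from non-elementariness alone.
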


\section{Growth of cone types}
\label{sec: growth of cone types}

\subsection{Essential cone types}

\begin{defi}
\label{def: cone type}
	Let $g\in G$.
	The \emph{cone type} of $g$ is the set of elements $u \in G$ such that there exists a geodesic of $X$ between 1 and $gu$ that passes through $g$.
	We denote it by $T_g$.
\end{defi}

Since $G$ is a hyperbolic group, the set of all cone types, denoted by $\mathcal T$, is finite \cite[Chap. 12, Th. 3.2]{CooDelPap90}.

\begin{defi}
\label{def:essential type}
	We say that a cone type $T \in \mathcal T$ is \emph{essential} if 
	\begin{displaymath}
		\exists a >0, \; \forall r \geq 0, \; \exists s\geq r, \quad \card {T\cap\ball s} \geq a \lambda^s,
	\end{displaymath}
	where $\lambda$ is the growth rate of $G$.
\end{defi}

\nota An element $g \in G$ is \emph{essential} if its cone type $T_g$ is essential. 
The set of all essential elements is denoted by $E$.
We write $\mathcal T_E$ for the set of all essential cone types.

\rem It follows easily from the definition that the growth rate of an essential cone type is exactly $\lambda$.
Roughly speaking, the essential elements are the ones who are responsible for the growth of $G$.

\begin{prop}
\label{res:strong definition essential type}
	There exists $\beta > 0$ (which only depends on $G$ and $A$) such that for all $T \in \mathcal T_E$, for all $r \in \R_+$, 
	\begin{displaymath}
		\card{T\cap \ball r} \geq \beta \lambda^r.
	\end{displaymath}
\end{prop}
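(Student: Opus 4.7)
The plan is to argue one essential cone type at a time, filling in the gaps between the ``good'' radii provided by Definition~\ref{def:essential type}, and then take a minimum over the finite set $\mathcal T_E$ to obtain the uniform constant $\beta$. Since there are only finitely many cone types, and hence only finitely many essential ones, this reduction is harmless.

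So fix $T = T_g \in \mathcal T_E$ and let $a = a_T > 0$ be the constant guaranteed by essentiality. For an arbitrary $r \in \R_+$, pick $s \geq r$ with $\card{T \cap \ball s} \geq a \lambda^s$. The point is to compare $\card{T \cap \ball r}$ with $\card{T \cap \ball s}$ via a prefix decomposition along geodesics. Using the equivalent characterisation $T_g = \{u \in G : |gu| = |g| + |u|\}$, I claim that to each $u \in T \cap \ball s$ with $|u| > r$ one can associate a factorisation $u = u_1 u_2$, obtained by truncating a geodesic from $1$ to $u$ at word length $r$, such that $|u_1| = r$, $|u_2| = |u| - r \leq s - r$, and $u_1 \in T$. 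The membership $u_1 \in T$ follows by combining the triangle inequalities $|gu_1| \leq |g| + |u_1|$ and $|gu| \leq |gu_1| + |u_2|$ with the defining equality $|gu| = |g| + |u|$, which forces equality throughout. For $u$ with $|u| \leq r$ we simply set $u_1 = u$ and $u_2 = 1$. The resulting map $u \mapsto (u_1, u_2)$ injects $T \cap \ball s$ into $(T \cap \ball r) \times \ball{s-r}$.

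Combining this injection with Coornaert's upper bound $\card{\ball{s-r}} \leq \alpha \lambda^{s-r}$ from Proposition~\ref{res:size of balls} yields
\begin{displaymath}
	a \lambda^s \leq \card{T \cap \ball s} \leq \card{T \cap \ball r} \cdot \alpha \lambda^{s-r},
\end{displaymath}
whence $\card{T \cap \ball r} \geq (a/\alpha) \lambda^r$. Setting $\beta = \min_{T \in \mathcal T_E} a_T/\alpha$, which is positive because $\mathcal T_E \subseteq \mathcal T$ is finite, completes the argument. I do not expect a serious obstacle: the only subtle point is the verification that $u_1 \in T$ in the decomposition, which is immediate from the word-length characterisation of the cone type. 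The rest is a direct pigeonhole of the sparse ``good'' radii against the uniform upper bound on ball cardinality, with the finiteness of $\mathcal T$ ensuring that no uniformity is lost in the final minimisation.
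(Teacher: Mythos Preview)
Your argument is correct and follows essentially the same route as the paper's proof: reduce to a single essential cone type via finiteness of $\mathcal T_E$, pick a good radius $s \geq r$, decompose elements of $T \cap \ball s$ as a prefix in $T \cap \ball r$ times a short suffix, and apply Coornaert's upper bound. The only minor slip is that for non-integer $r$ you cannot truncate at word length exactly $r$; the paper handles this by allowing the suffix to lie in $\ball{s-r+1}$ rather than $\ball{s-r}$, which costs an extra factor of $\lambda$ in the constant but changes nothing else.
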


\begin{proof}
	Note that $\mathcal T_E$ is finite.
	Hence it is sufficient to prove the following statement:
	\begin{displaymath}
		\forall T \in \mathcal T_E,\; \exists \beta >0,\; \forall r \geq 0,\; \card{T\cap \ball r} \geq \beta \lambda^r.
	\end{displaymath}
	Let $T$ be an essential type.
	By definition there exists $a > 0$ such that for all $r \geq 0$ there is $s\geq r$ such that $\card{T\cap \ball s}\geq a \lambda^s$.
	Let $r\in \R_+$.
	We denote by $s$ a real number larger than $r$ such that $\card{T\cap \ball s}\geq a \lambda^s$.
	Every element of $T \cap \ball s$ can be written $uv$ where $u \in T\cap \ball r$ and $v \in \ball{s-r+1}$.
	(Note that $r$ is not necessary an integer, therefore $v$ is a ball of radius $s-r+1$ and not $s-r$.)
	Consequently $\card {T\cap \ball s} \leq \card{T \cap \ball r}\card {\ball {s-r+1}}$.
	Using Proposition~\ref{res:size of balls} we obtain
	\begin{displaymath}
		a \lambda^s \leq \card {T\cap \ball s} \leq \alpha \lambda^{s-r+1}\card{T \cap \ball r}.
	\end{displaymath}
	Thus for all $r \geq 0$, $\card{T \cap \ball r} \geq  \alpha^{-1}\lambda^{-1}a \lambda^r$.
\end{proof}

\begin{lemm}
\label{res: gu essential gives g essential}
	Let $g \in G$.
	Let $u \in T_g$.
	If $gu$ is essential then so is $g$.
\end{lemm}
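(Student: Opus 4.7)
The plan is to exhibit an injection $T_{gu} \hookrightarrow T_g$ that only shifts radii by the bounded amount $|u|$, and then to transport the essential growth bound on $T_{gu}$ across this injection.

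First I would observe that, since the cone type is defined by the condition that $g$ lies on a geodesic from $1$ to $gu$, membership $u \in T_g$ is equivalent to $|gu| = |g| + |u|$. The key lemma is then the following: for every $v \in T_{gu}$, the product $uv$ belongs to $T_g$, and $|uv| = |u| + |v|$. Indeed, $v \in T_{gu}$ gives $|guv| = |gu| + |v| = |g| + |u| + |v|$, so by the triangle inequality both inequalities $|uv| \leq |u| + |v|$ and $|guv| \leq |g| + |uv|$ must be equalities, which forces $uv \in T_g$ and $|uv| = |u| + |v|$. In particular the map $v \mapsto uv$ sends $T_{gu} \cap \ball r$ injectively into $T_g \cap \ball{r+|u|}$.

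Now I apply the hypothesis that $gu$ is essential. By Definition~\ref{def:essential type} there exists $a > 0$ such that for every $r \geq 0$ one can find $s \geq r$ with
\begin{displaymath}
	\card{T_{gu} \cap \ball s} \geq a \lambda^s.
\end{displaymath}
Setting $s' = s + |u| \geq r$, the injection constructed above gives
\begin{displaymath}
	\card{T_g \cap \ball{s'}} \geq \card{T_{gu} \cap \ball s} \geq a \lambda^s = \left(a \lambda^{-|u|}\right) \lambda^{s'}.
\end{displaymath}
Since this holds for arbitrarily large $s'$, the constant $a' = a \lambda^{-|u|} > 0$ witnesses that $T_g$ is essential, hence $g \in E$.

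I do not expect any real obstacle here: the only thing to check carefully is that the map $v \mapsto uv$ really lands in $T_g$ and preserves lengths additively, which is a direct consequence of the definition of the cone type via geodesicity. The cosmetic shift of the radius by $|u|$ (which is a fixed constant once $g$ and $u$ are fixed) is absorbed into the multiplicative constant in front of $\lambda^s$, so the definition of essentiality is preserved.
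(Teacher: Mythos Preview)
Your proof is correct and follows essentially the same route as the paper: both arguments rest on the containment $uT_{gu} \subset T_g$ and then transport the growth lower bound across it, absorbing the shift by $|u|$ into the multiplicative constant. The only cosmetic difference is that the paper invokes Proposition~\ref{res:strong definition essential type} (the uniform bound $\card{T\cap\ball r}\geq\beta\lambda^r$ for essential $T$) rather than working directly from Definition~\ref{def:essential type} as you do; your version is thus slightly more self-contained.
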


\begin{proof}
	By definition of the cone type, $uT_{gu}$ is a subset of $T_g$.
	Hence for all $r \geq |u|$,
	\begin{math}
		 \card{T_g \cap \ball r} \geq  \card{T_{gu}\cap\ball{r-|u|}}.
	\end{math}
	However $gu$ is essential.
	It follows from Proposition~\ref{res:strong definition essential type} that for all $r \geq |u|$,
	\begin{displaymath}
		\card{T_g \cap \ball r}
		\geq \card{T_{gu}\cap\ball{r-|u|}}
		\geq \beta \lambda^{-|u|} \lambda^r.
	\end{displaymath}
	Thus $g$ is essential.
\end{proof}

Let $g \in G$ and $u \in T_g$.
According to the previous lemma, if $gu$ is essential, so is $g$.
The converse statement is not necessary true.
Nevertheless, the next proposition gives a lower bound for $\card{T_g\cap g^{-1}E\cap \ann re}$ which is the number of elements $u \in T_g \cap \ann re$ such that $gu$ is essential. 
(Recall that $\ann re$ is the annulus of radius $r$ defined in Section~\ref{sec: recall hyperbolic spaces}.)

\begin{prop}
\label{res: growth set of essential elements}
	There exists $\gamma >0$ (which only depends on $G$ and $A$) such that for all $g \in E$ and for all $r \geq 0$, for all $e \geq 1$.  
	\begin{displaymath}
		\card{T_g\cap g^{-1}E\cap \ann re} \geq \gamma \lambda^r.
	\end{displaymath}
\end{prop}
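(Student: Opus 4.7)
The plan is a double counting argument that controls $T_g \cap \ball{r+s}$ from two sides, where $s$ is a universal constant depending only on $G$ and $A$, to be fixed at the end. For every $v \in T_g$ with $r-e \leq |v| \leq r+s$, pick a geodesic $\sigma_v$ from $1$ to $gv$ passing through $g$; this exists by definition of $T_g$. Let $u_v$ be the element such that $g u_v$ sits on $\sigma_v$ at distance $\min(|v|,r)$ from $g$, and write $v = u_v w_v$. The initial portion of $\sigma_v$ up to $g u_v$ shows $u_v \in T_g$ with $|u_v| \in [r-e,r]$, so $u_v \in T_g \cap \ann re$; the terminal portion shows $w_v \in T_{gu_v}$ with $|w_v| \leq s$. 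Since $v = u_v w_v$, the assignment $v \mapsto (u_v, w_v)$ is injective, whence
\begin{displaymath}
\card{\left\{v \in T_g : r-e \leq |v| \leq r+s\right\}} \leq \sum_{u \in T_g \cap \ann re} \card{T_{gu} \cap \ball s}.
\end{displaymath}

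For the left-hand side, essentiality of $g$ (Proposition~\ref{res:strong definition essential type}) yields $\card{T_g \cap \ball{r+s}} \geq \beta \lambda^{r+s}$, while the number of excluded elements (those of length less than $r-e$) is at most $\alpha \lambda^{r-e}$ by Proposition~\ref{res:size of balls}; hence the left-hand side is at least $\beta \lambda^{r+s} - \alpha \lambda^{r-e}$. For the right-hand side, split the sum according to whether $gu \in E$. Essential indices $u$ use the trivial bound $\card{T_{gu} \cap \ball s} \leq \alpha \lambda^s$ (Proposition~\ref{res:size of balls}), contributing at most $A \alpha \lambda^s$ where $A := \card{T_g \cap g^{-1} E \cap \ann re}$ is the quantity to bound. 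For non-essential indices, the key input is that $\mathcal T \setminus \mathcal T_E$ is finite and, by negation of Definition~\ref{def:essential type}, $\card{T \cap \ball s}/\lambda^s \to 0$ for each non-essential $T$; so for any $\varepsilon > 0$ there exists a threshold $s_0(\varepsilon)$ with $\card{T \cap \ball s} \leq \varepsilon \lambda^s$ for every non-essential $T$ and every $s \geq s_0(\varepsilon)$. Combined with $\card{T_g \cap \ann re} \leq \alpha \lambda^r$, the non-essential part contributes at most $\alpha \varepsilon \lambda^{r+s}$.

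Assembling these bounds yields $\beta \lambda^{r+s} - \alpha \lambda^{r-e} \leq A \alpha \lambda^s + \alpha \varepsilon \lambda^{r+s}$. Fix $\varepsilon = \beta/(4\alpha)$, so the $\alpha \varepsilon$ term absorbs a $\beta/4$ fraction of the leading coefficient; then choose $s$, depending only on $G$ and $A$, so that $s \geq s_0(\varepsilon)$ and $\alpha \lambda^{-s-1} \leq \beta/4$ (possible since $\lambda > 1$). Using $e \geq 1$, the error term $\alpha \lambda^{r-e}$ is dominated by $(\beta/4)\lambda^{r+s}$, and rearranging gives $A \geq (\beta/(2\alpha)) \lambda^r$, so $\gamma = \beta/(2\alpha)$ works.

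The main obstacle is converting the qualitative non-essentiality in Definition~\ref{def:essential type} into the uniform quantitative decay estimate used above; this is where Cannon's finiteness of the set of cone types is decisive, since it turns pointwise decay $\card{T \cap \ball s}/\lambda^s \to 0$ into a simultaneous threshold $s_0(\varepsilon)$ valid for all non-essential $T$. The remaining geometric content is bookkeeping; the key choice of taking $\sigma_v$ to start at $1$ and pass through $g$ (rather than merely joining $g$ to $gv$) is what simultaneously places $u_v$ in $T_g$ and $w_v$ in $T_{g u_v}$ with no further work.
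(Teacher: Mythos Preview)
Your argument is correct and follows essentially the same route as the paper's proof: decompose elements of $T_g$ at scale $r+s$ through the annulus $\ann re$, split the sum over $u$ according to whether $gu$ is essential, and use finiteness of cone types to uniformly bound the contribution of the non-essential terms. The only cosmetic difference is that the paper runs the same estimate as a proof by contradiction (assuming $\gamma$ can be taken arbitrarily small and letting the role of your $\varepsilon$ be played by $\gamma$), whereas you argue directly and obtain the explicit constant $\gamma = \beta/(2\alpha)$.
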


\begin{proof}
	Note that it is sufficient to prove the proposition for $e=1$.
	Let $\gamma >0$.
	Suppose the proposition were false.
	There exists an essential element $g \in E$ and $r \in \R_+$ such that $\card{T_g\cap g^{-1}E\cap \ann r1} < \gamma \lambda^r$.
	Negating the definition of essential types, we have
	\begin{displaymath}
		\forall T \in \mathcal T \setminus \mathcal T_E,\; \exists s \geq 0,\; \forall t \geq s,\; \quad \card{T\cap \ball t} < \gamma \lambda^t.
	\end{displaymath}
	Recall that the set of cone types $\mathcal T$ is finite.
	Thus we have in fact
	\begin{equation}
	\label{eqn: growth set of essential elements}
		\exists s \geq 0,\; \forall T \in \mathcal T \setminus \mathcal T_E, \; \forall t \geq s,\; \quad \card{T\cap \ball t} < \gamma \lambda^t.
	\end{equation}
	Let $t \geq s$.
	It follows from the definition of cone types that
	\begin{displaymath}
		T_g\cap \ball {r+t} \subset T_g \cap \ball {r-1} \cup \left( \bigcup_{u \in T_g \cap \ann r1 } u\left(T_{gu}\cap \ball{t+1}\right)\right).
	\end{displaymath}
	Since $g$ is essential, Proposition~\ref{res:strong definition essential type} yields
	\begin{displaymath}
		\beta \lambda^{r+t}
		\leq \card{T_g\cap \ball {r+t}}
		\leq \card{T_g \cap \ball {r-1} } + \sum_{u \in T_g \cap  \ann r1 } \card{T_{gu}\cap \ball {t+1}}.
	\end{displaymath}
	Let $u \in T_g \cap \ann r1$.
	If $u$ does not belong to $g^{-1}E$, then $gu$ is not essential.
	By~(\ref{eqn: growth set of essential elements}), $\card{T_{gu}\cap \ball {t+1}} \leq \gamma \lambda^{t+1}$.
	On the other hand, if $u \in g^{-1}E$, then Proposition~\ref{res:size of balls} leads to $\card{T_{gu}\cap \ball {t+1}} \leq \card{\ball {t+1}} \leq \alpha \lambda^{t+1}$.
	It follows that $ \card{T_g\cap \ball {r+t}}$ is bounded above by
	\begin{displaymath}
		 \card{T_g \cap \ball {r-1} }
		+ \gamma \lambda^{t+1} \card{ T_g \cap \ann r1 \setminus g^{-1}E} + \alpha  \lambda^{t+1}\card{ T_g \cap g^{-1}E \cap \ann r1}.
	\end{displaymath}
	However by assumption, $\card{ T_g \cap g^{-1}E \cap \ann r1} \leq \gamma \lambda^r$.
	Moreover Proposition~\ref{res:size of balls} gives $\card{T_g \cap \ball {r-1}} \leq \alpha \lambda^{r-1}$ and $\card{ T_g \cap \ann r1 \setminus g^{-1}E} \leq \alpha \lambda^r$.
	Thus  for all $t \geq s$,
	\begin{math}
		\beta \lambda^{r+t}
		\leq \alpha \lambda^{r-1}  + 2\alpha\gamma \lambda^{r+t+1}.
	\end{math}
	Therefore $0<\beta \leq 2 \alpha \lambda \gamma$.
	This inequality holds for all $\gamma >0$, which is impossible.
\end{proof}

\subsection{Lexicographic types}

\paragraph{}Recall that $A$ is a finite generating set of $G$ and $X$ the Cayley graph of $G$ with respect to $A$.
In this section we fix an arbitrary order on $A\cup A^{-1}$.
Let $g,h \in G$.
Using the labeling of the edges of $X$, any geodesic joining $g$ to $h$ can be identify with a word over the alphabet $A \cup A^{-1}$ representing $hg^{-1}$.
Thus the set of geodesics inherits from the lexicographic order.
(We read the words from the left to the right.)
For all $g \in G$, we denote by $\sigma_g$ the geodesic joining 1 to $g$ which is the smallest for the lexicographic order.
We call it the \emph{lexicographic geodesic} from 1 to $g$.
Note that if $h \in G$ lies on $\sigma_g$ then $\sigma_h$ is exactly the subpath of $\sigma_g$ between 1 and $h$.

\begin{defi}
\label{def: lex type}
	Let $g\in G$.
	The \emph{lexicographic type} of $g$ is the set of elements $u \in G$ such that $\sigma_{gu}$ passes through $g$.
	We denote it by $L_g$.
\end{defi}

\rem It follows from the definition that $L_g$ is a subset of $T_g$.
Contrary to $\mathcal T$, it is not clear whether or not the set of all lexicographic types is finite.

\paragraph{}
Our goal is to construct a subset $F$ of $G$ such that its elements satisfy analogues for the lexicographic types of Proposition~\ref{res: growth set of essential elements} and Lemma~\ref{res: gu essential gives g essential}.
To that end, we need the following lemma.

\begin{lemm}[Arzhantseva-Lysenok,  {\cite[Lemma 5]{ArzLys02}}]
\label{res: separate elements}
	There exists a constant $\rho>0$ which only depends on $G$ and $A$ satisfying the following.
	For every finite subset $P$ of $G$ there is a subset $P'$ of $P$ such that $\card {P'}\geq \rho \card P$ and for all distinct $g,g' \in P'$, $\dist g{g'} > 20\delta$.
\end{lemm}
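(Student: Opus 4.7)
The plan is to reduce the statement to a standard greedy independent-set argument, the only group-theoretic input being the uniform bound on ball cardinalities from Proposition~\ref{res:size of balls}. The underlying observation is that $G$ acts isometrically on $X$ by left multiplication, so for every $g \in G$ the translate $gB(20\delta)$ has the same cardinality as $\ball{20\delta}$. By Proposition~\ref{res:size of balls} this cardinality is at most $N := \alpha \lambda^{20\delta}$, a constant depending only on $G$ and $A$.

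Given a finite $P \subset G$, I would form the auxiliary graph $\Gamma$ with vertex set $P$ in which $g$ and $g'$ are joined by an edge whenever $g \neq g'$ and $\dist g{g'} \leq 20\delta$. The neighbors of any vertex $g$ lie inside $B(g,20\delta) \setminus \{g\}$, which contains at most $N-1$ elements of $G$, let alone of $P$; thus $\Gamma$ has maximum degree at most $N-1$. I would then extract $P'$ greedily: repeatedly pick any remaining vertex, put it into $P'$, and delete both it and all its $\Gamma$-neighbors from $P$. Each step removes at most $N$ elements from $P$ while adding exactly one to $P'$, so
\begin{displaymath}
\card{P'} \geq \frac{\card P}{N}.
\end{displaymath}
By construction $P'$ is $\Gamma$-independent, meaning any two distinct elements of $P'$ satisfy $\dist g{g'} > 20\delta$.

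Setting $\rho := 1/(\alpha \lambda^{20\delta})$ gives a constant that depends only on $G$ and $A$ and yields the conclusion. There is essentially no serious obstacle here: once Proposition~\ref{res:size of balls} is invoked, the argument is a direct pigeonhole/greedy construction, and the only point meriting care is the use of left-invariance of the word metric to transfer the bound on $\card{\ball{20\delta}}$ to every ball of radius $20\delta$. The specific threshold $20\delta$ plays no role beyond fixing the constant $\rho$; the same argument works for any a priori chosen radius.
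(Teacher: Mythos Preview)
Your argument is correct. Note, however, that the paper does not actually prove this lemma: it is quoted verbatim from \cite[Lemma~5]{ArzLys02} and no proof is supplied. So there is nothing to compare your approach against within the present paper.

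One small remark: invoking Proposition~\ref{res:size of balls} is more than you need. The only fact required is that $\card{\ball{20\delta}}$ is finite and depends only on $G$ and $A$, which is immediate from finite generation (e.g.\ $\card{\ball{20\delta}} \leq (2\card A +1)^{\lfloor 20\delta\rfloor +1}$). Using Coornaert's estimate gives a sharper value of $\rho$, but the lemma only asks for \emph{some} positive constant, so the elementary bound suffices. Either way, your greedy independent-set argument is the standard one and is exactly how such a statement is proved.
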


Recall that $\gamma$ is the constant given by Proposition~\ref{res: growth set of essential elements}.
Let us put $\nu = \rho\gamma \lambda^{-4\delta}\card{\ball {4\delta}}^{-1}$.
This number only depends on $G$ and $A$.
Let $r$ be a real number larger than $10 \delta$.
The set $F$ that we are going to build will depend on the parameter $r$, which represents a distance.
However for simplicity, we do not mention the dependence on $r$ in the notation.
First, we construct by induction a non-increasing sequence $(H_i)$ of subsets of $G$.
\begin{itemize}
	\item Put $H_0 = G$.
	\item Let $i \in \N$.
	Assume that $H_i$ is already defined.
	The set $H_{i+1}$ is 
	\begin{displaymath}
		H_{i+1} = \set{g \in H_i \fantomB}{\card{L_g\cap g^{-1}H_i \cap \ann r{9\delta}} \geq \nu \lambda^r}
	\end{displaymath}
\end{itemize}
The set $H$ is defined to be the intersection of all $H_i$'s.

\begin{lemm}
\label{res: weak definition of H_i}
	Let $g \in G$.
	Let $i \in \N$.
	If $\card{L_g\cap g^{-1}H_i \cap \ann r{9\delta}} \geq \nu \lambda^r$ then $g$ belongs to $H_{i+1}$.
\end{lemm}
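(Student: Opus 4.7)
The content of the lemma is essentially a bookkeeping observation about the construction of the nested family $(H_i)$. Unpacking the definition of $H_{i+1}$, the statement asks us to verify two things: that $g \in H_i$, and that the cardinality estimate holds. The second is literally the hypothesis, so the only real content is to show that $g$ already lies in $H_i$. The plan is therefore to exploit the fact that $(H_i)$ is a non-increasing sequence of subsets of $G$, so a lower bound involving $H_i$ automatically yields the same lower bound with any $H_j$, $j \leq i$, substituted for it.

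More precisely, I would argue by contradiction. Suppose $g \notin H_{i+1}$ and let $j$ be the smallest index with $g \notin H_j$. Since $H_0 = G$ we have $j \geq 1$, hence $g \in H_{j-1}$. The negation of the defining property of $H_j$ then forces
\begin{displaymath}
	\card{L_g \cap g^{-1} H_{j-1} \cap \ann r{9\delta}} < \nu \lambda^r.
\end{displaymath}
On the other hand, the monotonicity $H_i \subseteq H_{j-1}$ (valid because $j-1 \leq i$ by our choice of $j$, combined with the assumption $g \notin H_{i+1}$ forcing $j \leq i+1$) yields $g^{-1} H_i \subseteq g^{-1} H_{j-1}$, so the hypothesis implies
\begin{displaymath}
	\card{L_g \cap g^{-1} H_{j-1} \cap \ann r{9\delta}} \geq \card{L_g \cap g^{-1} H_i \cap \ann r{9\delta}} \geq \nu \lambda^r,
\end{displaymath}
a contradiction. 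Hence $g \in H_i$, and then the hypothesis together with the definition of $H_{i+1}$ gives $g \in H_{i+1}$.

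There is no genuine obstacle here: the lemma is a purely formal consequence of the monotonicity of the construction and is presumably stated as a lemma because subsequent arguments want to verify the annulus bound with $H_i$ replaced by a larger (more convenient) set than $H_i$ itself, and conclude membership in $H_{i+1}$ without reproving the intermediate inclusions.
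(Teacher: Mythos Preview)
Your proof is correct and uses the same key observation as the paper, namely that the monotonicity $H_i \subseteq H_j$ for $j \leq i$ propagates the annulus bound down the chain. The paper phrases this as a direct induction on $j$ (showing $g \in H_j$ for all $j \leq i+1$), whereas you take the contrapositive via a minimal failing index; these are equivalent packagings of the same argument.
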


\begin{proof}
	Let $j\leq i$.
	By construction $H_i$ is a subset of $H_j$.
	Therefore
	\begin{displaymath}
		\card{L_g\cap g^{-1}H_j \cap \ann r{9\delta}} \geq \card{L_g\cap g^{-1}H_i \cap \ann r{9\delta}} \geq \nu \lambda^r.
	\end{displaymath}
	A proof by induction on $j$ shows that for all $j \leq i+1$, $g$ belongs to $H_j$.
\end{proof}

\begin{prop}
\label{res: H_i is dense in E}
	For all $i \in \N$, for all $g \in E$, $B(g,4\delta)\cap H_i$ is non-empty.
\end{prop}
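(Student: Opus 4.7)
The plan is to argue by induction on $i \in \N$. The base case $i = 0$ is immediate because $H_0 = G$. For the inductive step, assume every essential element has a neighbor in $H_i$ within distance $4\delta$. Fix $g \in E$. By Lemma~\ref{res: weak definition of H_i}, it suffices to exhibit a point $p \in B(g,4\delta)$ satisfying
\[
\card{L_p \cap p^{-1}H_i \cap \ann{r}{9\delta}} \geq \nu \lambda^r,
\]
for then $p$ automatically lies in $H_{i+1}$.

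To construct such a $p$, I would first apply Proposition~\ref{res: growth set of essential elements} at radius $r - 4\delta$ with $e = 1$ (legitimate because $r \geq 10\delta$) to obtain at least $\gamma\lambda^{r-4\delta}$ elements $u \in T_g \cap g^{-1}E \cap \ann{r-4\delta}{1}$, and then invoke Lemma~\ref{res: separate elements} to extract a subset $P'$ of size at least $\rho\gamma\lambda^{r-4\delta}$ whose elements are pairwise more than $20\delta$ apart. For each $u \in P'$, the element $gu$ is essential, so the induction hypothesis yields $h_u \in H_i \cap B(gu, 4\delta)$; the separation of the $gu$'s by more than $20\delta$ forces the $h_u$'s to be pairwise distinct.

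Next, for each $u \in P'$ I would choose a geodesic from $1$ to $gu$ passing through $g$ (possible since $u \in T_g$). The Gromov product satisfies $\gro{gu}{h_u}{1} \geq |gu| - 4\delta = |g| + |u| - 4\delta \geq |g|$, the last inequality because $|u| \geq r - 4\delta - 1 \geq 4\delta$ (using $r \geq 10\delta$ and $\delta \geq 1$). The $4\delta$-thin triangle property applied to $1, gu, h_u$ then places the vertex $p_u$ of $\sigma_{h_u}$ at distance $|g|$ from $1$ within $4\delta$ of $g$. Since $p_u$ ranges over $B(g,4\delta)$, a set of cardinality $\card{\ball{4\delta}}$, the pigeonhole principle produces a single $p \in B(g,4\delta)$ with $p_u = p$ for at least $\rho\gamma\lambda^{r-4\delta}/\card{\ball{4\delta}} = \nu\lambda^r$ values of $u \in P'$.

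Finally, for each such $u$ the element $v_u = p^{-1}h_u$ lies in $L_p$ (because $p$ lies on $\sigma_{h_u}$), lies in $p^{-1}H_i$ (because $h_u \in H_i$), and has length $|h_u| - |g| \in [|u| - 4\delta, |u| + 4\delta] \subset [r - 9\delta, r]$ (the right inclusion using $\delta \geq 1$). Distinct $u$'s produce distinct $v_u$'s because the $h_u$'s are distinct, which yields the required inequality and finishes the induction. The delicate part is the bookkeeping of scales: the intermediate radius $r - 4\delta$ has to be chosen precisely so that the thin-triangle argument applies at the vertex of $\sigma_{h_u}$ at height $|g|$ and so that the resulting lengths $|v_u|$ fall inside the prescribed annulus $\ann{r}{9\delta}$, while the constants $\rho$, $\gamma$, $\card{\ball{4\delta}}$, and $\lambda^{-4\delta}$ combine exactly into $\nu$.
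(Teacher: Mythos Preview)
Your proof is correct and follows essentially the same approach as the paper's: induction on $i$, using Proposition~\ref{res: growth set of essential elements} at a shifted radius, separating via Lemma~\ref{res: separate elements}, pushing each $gu$ to an $h_u \in H_i$ by the induction hypothesis, projecting to the point of $\sigma_{h_u}$ at height $|g|$, and pigeonholing over $B(g,4\delta)$. The only cosmetic differences are that the paper uses the annulus $\ann{r-4\delta}{\delta}$ (you use $\ann{r-4\delta}{1}$) and phrases the thin-triangle step as ``$\dist{gu}{g} \geq r-5\delta \geq \dist{h_u}{gu}$, so by hyperbolicity $\dist{l}{g} \leq 4\delta$'' rather than via an explicit Gromov-product computation.
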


\begin{proof}
	We prove this proposition by induction on $i$.
	If $i=0$, the proposition follows from the fact that $H_0 = G$.
	Assume now that the proposition holds for $i \in \N$.
	Let $g \in E$.
	According to Proposition~\ref{res: growth set of essential elements}, $\card{T_g \cap g^{-1}E\cap \ann {r-4\delta}\delta} \geq \gamma \lambda^{r-4\delta}$.
	By Lemma~\ref{res: separate elements}, there exists a subset $P$ of $T_g \cap g^{-1}E\cap \ann {r-4\delta}\delta$ such that 	
	\begin{enumerate}
		\item $\card P \geq \rho \card{T_g \cap g^{-1}E\cap \ann {r-4\delta}\delta} \geq \rho\gamma \lambda^{r-4\delta}$,
		\item for all distinct $u,u' \in P$, $\dist u{u'} > 20 \delta$.
	\end{enumerate}
	Let $u \in P$.
	The isometry $gu$ is essential.
	Applying the induction assumption $B(gu, 4\delta) \cap H_i$ contains an element that we shall write $h_u=lv$ where $l$ belongs to $G$ with $|g|=|l|$ and $v$ to $L_l$.
	By construction $l$ (\resp $g$) belongs to a geodesic between 1 and $lv$ (\resp $gu$), thus
	\begin{displaymath}
		\dist{|u|}{|v|} = \dist{|gu|}{|lv|}\leq \dist{gu}{lv} \leq 4 \delta.
	\end{displaymath}
	Since $u$ lies in $\ann {r-4\delta}\delta$, $v$ is an element of $\ann r{9\delta}$.
	Moreover $\dist {gu}g \geq r - 5 \delta \geq \dist{lv}{gu}$.
	By hyperbolicity $\dist lg \leq 4 \delta$.
	Consequently $\left\{h_u | u \in P \right\}$ is a subset of  $\bigcup_{l \in B(g, 4\delta)} l\left(L_l\cap \ann r{9\delta}\right)$.
	On the other hand we claim that $\left\{h_u | u \in P \right\}$ is a subset of $H_i$ that contains exactly $\card P$ elements.
	Let $u,u' \in P$ such that $h_u = h_{u'}$.
	By triangle inequality we have
	\begin{displaymath}
		\dist u{u'} = \dist {gu}{gu'} \leq \dist{gu}{h_u} + \dist{h_{u'}}{gu'} \leq 8 \delta.
	\end{displaymath}
	By definition of $P$, $u = u'$.	
	Therefore $\bigcup_{l \in B(g, 4\delta)} l\left(L_l\cap \ann r{9\delta}\right)$ contains at least $\card P$ elements of $H_i$.
	Hence there is $l \in B(g, 4\delta)$ such that
	\begin{displaymath}
		\card {L_l\cap l^{-1}H_i\cap \ann r{9\delta}} \geq \card{\ball{4\delta}}^{-1} \card P \geq  \rho\gamma \card{\ball{4\delta}}^{-1}\lambda^{r-4\delta} = \nu \lambda^r .
	\end{displaymath}
	It follows from Lemma~\ref{res: weak definition of H_i}, that $l$ belongs to $B(g, 4\delta) \cap H_{i+1}$.
	Thus the proposition holds for $i+1$.
\end{proof}

\begin{coro}
\label{res: H is dense in E}
	For all $g \in E$, $B(g,4\delta)\cap H$ is non-empty.
\end{coro}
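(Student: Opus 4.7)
The plan is to deduce this corollary from Proposition~\ref{res: H_i is dense in E} via a standard finite intersection argument. First I would observe that the ball $B(g, 4\delta)$ is a finite subset of $G$; this follows from the finite generation of $G$, and one can even bound its cardinality by $\alpha \lambda^{4\delta}$ via Proposition~\ref{res:size of balls}. Next, by construction the sequence $(H_i)_{i \in \N}$ is non-increasing, so the sequence of intersections $\bigl(B(g, 4\delta) \cap H_i\bigr)_{i \in \N}$ is a non-increasing sequence of subsets of the finite set $B(g, 4\delta)$.

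By Proposition~\ref{res: H_i is dense in E}, every term of this sequence is non-empty. A non-increasing sequence of non-empty subsets of a finite set must eventually stabilize, so there exists $i_0 \in \N$ such that $B(g, 4\delta) \cap H_i = B(g, 4\delta) \cap H_{i_0}$ for all $i \geq i_0$. Since $H = \bigcap_{i \in \N} H_i$, we get
\[ B(g, 4\delta) \cap H = \bigcap_{i \in \N} \bigl( B(g, 4\delta) \cap H_i \bigr) = B(g, 4\delta) \cap H_{i_0}, \]
which is non-empty. There is no substantial obstacle here: all of the real work — controlling the extension of lexicographic geodesics through essential elements — has already been packed into Proposition~\ref{res: H_i is dense in E} and the preceding cone-type estimates (in particular Proposition~\ref{res: growth set of essential elements} and the Arzhantseva–Lysenok separation lemma). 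The corollary is then just the observation that $H$ inherits the local density property of the $H_i$ because the ambient ball is finite.
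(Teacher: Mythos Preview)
Your proof is correct and is exactly the argument the paper has in mind: the paper's own proof is the single line ``both corollaries follow from the fact that $H = \bigcap_{i \in \N} H_i$,'' and your finite-intersection argument is the natural way to unpack that sentence. There is nothing to add.
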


\begin{coro}
\label{res: relative growth H}
	For all $g \in H$, $\card{L_g \cap g^{-1}H \cap \ann r{9\delta}} \geq \nu \lambda^r$.
\end{coro}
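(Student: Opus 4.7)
The plan is to exploit the fact that the annulus $\ann r{9\delta}$ is a finite subset of $G$, and then to pass to the limit in the defining inequality of the sets $H_i$.

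First, I would observe that since $g \in H$, we have $g \in H_{i+1}$ for every $i \in \N$. By the very definition of $H_{i+1}$, this gives
\begin{displaymath}
    \card{L_g \cap g^{-1}H_i \cap \ann r{9\delta}} \geq \nu \lambda^r \quad \text{for all } i \in \N.
\end{displaymath}
Next, since $G$ is finitely generated, the annulus $\ann r{9\delta}$ is finite. Consequently $(L_g \cap g^{-1}H_i \cap \ann r{9\delta})_{i \in \N}$ is a non-increasing sequence of finite sets, which must therefore stabilise: there exists $i_0 \in \N$ such that for all $i \geq i_0$,
\begin{displaymath}
    L_g \cap g^{-1}H_i \cap \ann r{9\delta} = L_g \cap g^{-1}H_{i_0} \cap \ann r{9\delta}.
\end{displaymath}

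Then I would identify this stable value with $L_g \cap g^{-1}H \cap \ann r{9\delta}$. Since left multiplication by $g^{-1}$ is a bijection of $G$, it commutes with arbitrary intersections, so $g^{-1}H = g^{-1}\bigcap_i H_i = \bigcap_i g^{-1}H_i$. Taking the intersection over $i \geq i_0$ of the stable equality above yields
\begin{displaymath}
    L_g \cap g^{-1}H \cap \ann r{9\delta} = \bigcap_{i \geq i_0} \left( L_g \cap g^{-1}H_i \cap \ann r{9\delta}\right) = L_g \cap g^{-1}H_{i_0} \cap \ann r{9\delta}.
\end{displaymath}
The cardinality of the right-hand side is at least $\nu \lambda^r$ by the first step, which gives the desired inequality.

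There is no real obstacle here: this is essentially a routine compactness argument, the only mild subtlety being to verify that the $g^{-1}$-translation commutes with the nested intersection defining $H$, which is immediate since it is a bijection on $G$.
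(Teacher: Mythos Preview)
Your proof is correct and follows exactly the approach the paper intends: the paper's own proof is simply the single line ``Both corollaries follow from the fact that $H  = \bigcap_{i \in \N}H_i$,'' and your argument spells out precisely how that fact is used (the defining inequality of $H_{i+1}$ together with stabilisation of a nested intersection of finite sets).
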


\begin{proof}
	Both corollaries follow from the fact that $H  = \bigcap_{i \in \N}H_i$.
\end{proof}

\rem In particular the set $H$ is non empty.
Since $1$ is essential ($T_1 = G$) Corollary~\ref{res: H is dense in E} tells us that $\ball {4 \delta}$ contains an element of $H$.
Actually, the same proof as the one of Proposition~\ref{res: H_i is dense in E} shows that 1 belongs to $H$.

\paragraph{}Corollary~\ref{res: relative growth H} is an analogue for the lexicographic types of Proposition~\ref{res: growth set of essential elements}.
However given $g \in G$ and $u \in L_g$ such that $gu$ belongs to $H$ there is no reason that $g$ should also belong to $H$.
That is why we have to consider a subset $F$ of $H$ which will in addition satisfy an analogue of Lemma~\ref{res: gu essential gives g essential} (see Lemma~\ref{res: F stable under restriction}).
To that end we proceed by induction
\begin{itemize}
	\item Put $F_0 =\{1\}$.
	\item Let $i \in \N$ such that $F_i$ is already defined.
	The set $F_{i+1}$ is given by
	\begin{displaymath}
		F_{i+1} = \bigcup_{g\in F_i} g\left( L_g \cap g^{-1}H \cap \ann r{9\delta} \right).
	\end{displaymath}
\end{itemize}
Finally, the set $F$ is the union of all the $F_i$'s.
Note that $F$ is a subset of $H$.
Moreover for all $g \in F$, $L_g \cap g^{-1}H\cap \ann r{9\delta}$ lies inside $g^{-1}F$. 
Therefore Corollary~\ref{res: relative growth H} leads to the following result.

\begin{lemm}
\label{res: relative growth F}
	For all $g \in F$, $\card{L_g \cap g^{-1}F \cap \ann r{9\delta}} \geq \nu \lambda^r$.
\end{lemm}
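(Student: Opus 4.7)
The plan is to show that, for $g \in F$, the two sets $L_g \cap g^{-1}F \cap \ann r{9\delta}$ and $L_g \cap g^{-1}H \cap \ann r{9\delta}$ actually coincide, after which Corollary \ref{res: relative growth H} closes the argument immediately.

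First I would record the inclusion $F \subseteq H$, which follows by a trivial induction on $i$: $F_0 = \{1\} \subseteq H$ (noted in the remark after Corollary \ref{res: H is dense in E}), and if $F_i \subseteq H$ then by construction every element of $F_{i+1}$ has the form $gu$ with $g \in F_i$ and $u \in g^{-1}H$, so $gu \in H$. Consequently $g^{-1}F \subseteq g^{-1}H$, giving one inclusion
\begin{displaymath}
L_g \cap g^{-1}F \cap \ann r{9\delta} \subseteq L_g \cap g^{-1}H \cap \ann r{9\delta}.
\end{displaymath}

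The key step is the reverse inclusion, which is where the construction of $F$ by accretion was designed to pay off. Given $g \in F$, fix an index $i$ with $g \in F_i$. For any $u \in L_g \cap g^{-1}H \cap \ann r{9\delta}$, the definition of $F_{i+1}$ places $gu$ directly in $F_{i+1} \subseteq F$; hence $u \in g^{-1}F$, and the two sets above are equal.

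Finally, since $g \in F \subseteq H$, Corollary \ref{res: relative growth H} gives $\card{L_g \cap g^{-1}H \cap \ann r{9\delta}} \geq \nu \lambda^r$, and by the equality just established the same bound holds for $L_g \cap g^{-1}F \cap \ann r{9\delta}$. There is no real obstacle here: the entire content of the lemma is tautological once one unpacks the inductive definition of $F$, which was tailored exactly so that extending a prefix in $F$ by an admissible suffix in $g^{-1}H$ automatically lands in $F$.
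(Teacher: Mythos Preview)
Your proof is correct and follows exactly the paper's approach: the paper notes that $F\subseteq H$ and that for $g\in F$ the set $L_g\cap g^{-1}H\cap \ann r{9\delta}$ lies in $g^{-1}F$, then invokes Corollary~\ref{res: relative growth H}. You have simply spelled out the (trivial) justifications for these two facts that the paper leaves implicit.
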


\begin{lemm}
\label{res: F stable under restriction}
	Let $g \in F$ and $x$ be a point of $\sigma_g$.
	There exists $h \in F$ which lies on $\sigma_g$ between 1 and $x$ such that $\dist xh \leq r$.
\end{lemm}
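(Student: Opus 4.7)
The plan is to exploit the telescoping chain structure built into the construction of $F$. Any $g \in F$ belongs to some $F_k$, so by unrolling the recursive definition there exists a sequence $1 = h_0, h_1, \ldots, h_k = g$ with $h_j \in F_j \subset F$ and $h_{j+1} = h_j u_j$ for some $u_j \in L_{h_j} \cap h_j^{-1}H \cap \ann r{9\delta}$. Consecutive points of this chain are within distance $r$, since $\dist{h_j}{h_{j+1}} = |u_j| \leq r$.

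The key observation is that all the $h_j$'s lie on the single geodesic $\sigma_g$, in increasing order of $|h_j|$. Indeed, the condition $u_j \in L_{h_j}$ says exactly that $\sigma_{h_{j+1}}$ passes through $h_j$, and its initial segment from $1$ to $h_j$ then coincides with $\sigma_{h_j}$ by the nesting remark made after Definition~\ref{def: lex type}. Iterating, $\sigma_g = \sigma_{h_k}$ contains $\sigma_{h_{k-1}}, \sigma_{h_{k-2}}, \ldots, \sigma_{h_0}$ as nested initial subpaths, placing each $h_j$ on $\sigma_g$ at distance $|h_j|$ from $1$.

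Given $x \in \sigma_g$, let $j^*$ be the largest index with $|h_{j^*}| \leq |x|$; such an index exists since $|h_0| = 0$. If $j^* = k$ then $x = g$ and $h = g$ works. Otherwise $x$ lies on $\sigma_g$ strictly between $h_{j^*}$ and $h_{j^*+1}$, so
\[
\dist{h_{j^*}}{x} \leq \dist{h_{j^*}}{h_{j^*+1}} = |u_{j^*}| \leq r,
\]
and the choice $h = h_{j^*} \in F$ meets all three requirements. I foresee no serious obstacle: the whole argument is structural and hinges on the nested-geodesic property of lexicographic types, which is precisely the reason $F$ was built from $L_g$ rather than from the cone types $T_g$ in the first place.
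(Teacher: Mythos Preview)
Your argument is correct and follows essentially the same approach as the paper: both exploit the recursive construction of $F$ together with the nesting property of lexicographic geodesics to locate an element of $F$ on $\sigma_g$ just below $x$. The only cosmetic difference is organizational—you build the full chain $1=h_0,\dots,h_k=g$ explicitly and pick the last $h_{j^*}$ before $x$, whereas the paper phrases the same step as a minimality argument on the index $i$.
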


\begin{proof}
	Let us denote by $i$, the smallest integer such that $x$ is on a geodesic $\sigma_l$ with $l \in F_i \cap \sigma_g$.
	If $i=0$ then $l=x=1$. Thus the lemma is obvious.
	Assume now that $i \geq 1$.
	By definition there exists $h \in F_{i-1}$ such that $l$ belongs to $h\left(L_h\cap h^{-1}H\cap \ann r{9\delta}\right)$.
	In particular $h$ and $l$ are two points of $\sigma_g$.
	We claim that $x$ lies on $\sigma_g$ between $h$ and $l$.
	Suppose it were false, then $x$ would lie on $\sigma_h$ which contradicts the minimality of $i$.
	However, by construction $\dist lh \leq r$
	Consequently $h$ is a point of  $F\cap \sigma_g$ between 1 and $x$ such that $\dist xh \leq \dist lh \leq r$.
\end{proof}

Finally we have proved the following result.
\begin{prop}
\label{res: all properties F}
	There is $\nu > 0$ (which only depends on $G$ and $A$) such that for all $r \geq 10 \delta$, there exists a subset $F$ of $G$ satisfying the following properties.
	\begin{enumerate}
		\item 1 belongs to $F$,
		\item for all $g \in F$, $\card{L_g \cap g^{-1}F \cap \ann r{9\delta}} \geq \nu \lambda^r$,
		\item for all $g \in F$ , for all $x\in \sigma_g$, there exists $h \in F$ which lies on $\sigma_g$ between 1 and $x$ such that $\dist xh \leq r$.
	\end{enumerate}
\end{prop}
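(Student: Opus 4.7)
The plan is to take $F = \bigcup_i F_i$ as constructed in the preceding paragraphs, with $\nu = \rho\gamma\card{\ball{4\delta}}^{-1}\lambda^{-4\delta}$ (which depends only on $G$ and $A$ through the constants of Lemma~\ref{res: separate elements}, Proposition~\ref{res: growth set of essential elements} and Proposition~\ref{res:size of balls}), and to verify the three properties in turn by assembling results already proved in the section.

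Property (i) is almost immediate from $F_0 = \{1\} \subseteq F$. The only small subtlety is checking that $1 \in H$, so that the inductive step defining $F_1$ from $F_0$ is not vacuous; since $T_1 = G$, the element $1$ is essential, and the argument of Proposition~\ref{res: H_i is dense in E} applied at $g = 1$ shows inductively that $1 \in H_i$ for every $i$.

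Property (ii) is exactly Lemma~\ref{res: relative growth F}. It follows from Corollary~\ref{res: relative growth H} once one notes that, by the very definition of $F_{i+1}$, every element $u$ of $L_g \cap g^{-1}H \cap \ann r{9\delta}$ sends $g \in F_i$ to $gu \in F_{i+1} \subseteq F$; hence $L_g \cap g^{-1}H \cap \ann r{9\delta} \subseteq L_g \cap g^{-1}F \cap \ann r{9\delta}$, and the lower bound $\nu \lambda^r$ transfers from $H$ to $F$. Property (iii) is Lemma~\ref{res: F stable under restriction}, established by induction on the smallest $i$ with $g \in F_i$: the base case $i = 0$ forces $g = 1$; the inductive step writes $g = hu$ with $h \in F_{i-1}$ lying on $\sigma_g$ and $\dist hg \leq r$, then splits on whether $x$ lies on $\sigma_h$ (apply the induction hypothesis) or between $h$ and $g$ on $\sigma_g$ (take this $h$ itself).

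The real content of the proposition therefore sits upstream, in Proposition~\ref{res: H_i is dense in E}, which is what prevents the nested family $H_i$ from collapsing. That argument uses the separation Lemma~\ref{res: separate elements} to extract a $20\delta$-separated subset of $T_g \cap g^{-1}E \cap \ann{r-4\delta}\delta$, applies the inductive density of $H_i$ near each of those points, and uses $4\delta$-thin triangles to pull geodesic data from $\sigma_{gu}$ back through a nearby vertex $l$ with $\dist lg \leq 4\delta$. This is the step that fixes the shape of $\nu$ and is the main obstacle hiding behind what is otherwise a clean assembly of three bullets.
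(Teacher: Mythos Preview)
Your proposal is correct and follows essentially the same approach as the paper: the proposition is a summary assembled from the preceding construction, with (i) coming from $F_0=\{1\}$ (and $1\in H$), (ii) being Lemma~\ref{res: relative growth F}, and (iii) being Lemma~\ref{res: F stable under restriction}. The only cosmetic difference is in (iii): the paper inducts on the smallest $i$ such that $x$ lies on $\sigma_l$ for some $l\in F_i\cap\sigma_g$, whereas you induct on the smallest $i$ with $g\in F_i$ and split on the position of $x$ relative to $h$; both arguments are equivalent and yield the same bound.
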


\section{Avoiding large powers}
\label{sec:avoiding large powers}

The goal here is to estimate the growth rate of a subset of $G$ ``without'' power.
This section involves many parameters. 
As a warmup we start with the case of free groups.
We present briefly the idea used by S.I.~Adian in \cite{Adi79}.
The estimation that we obtain in that particular case will also be useful in Section~\ref{sec: periodic quotients}.

\subsection{The case of free groups}
\label{sec:avoiding large powers - free case}

In this section we assume that $A$ is a free generating set of $\free k$ i.e., it contains exactly $k$ elements.
Consequently the exponential growth rate of $\free k$ with respect to $A$ is $\lambda=2k-1$.
Let $m \in \N$.
An element of $\free k$ is said to be $m$-aperiodic, if the reduced word over the alphabet $A \cup A^{-1}$ representing it does not contain a subword of the form $u^m$.
We denote by $F_m$ the set of $m$-aperiodic elements of $\free k$

\begin{prop}
\label{res: min F_m with quantifiers - free case}
	For all integers $m \geq 2$, for all $s \in \N$,
	\begin{displaymath}
		\card{F_m\cap \ball{s+1}} \geq
		\lambda \card{F_m\cap \ball s} -  \frac {2k}{2k-1} \sum_{j\geq 1} \lambda^j\card{F_m\cap \ball{s+1-mj}}.
	\end{displaymath}
\end{prop}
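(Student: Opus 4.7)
Set $\phi(s) = \card{F_m \cap \ball s}$, with the convention $\phi(t)=0$ for $t<0$. The plan is to count $\phi(s+1)$ by extending words in $F_m \cap \ball s$ by one letter and controlling which extensions introduce an $m$-th power. Every $w \in F_m \cap \ball{s+1}$ with $|w| \geq 1$ factors uniquely as $w = w'a$, with $a \in A \cup A^{-1}$ the last letter and $w' \in F_m \cap \ball s$ the prefix of length $|w|-1$; conversely, for any $w' \in F_m \cap \ball s$ and any letter $a$ for which $w'a$ is reduced, the product $w'a$ lies in $\ball{s+1}$, and the map $(w',a) \mapsto w'a$ is injective. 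Since $1 \in F_m$ admits $2k$ reduced extensions while each other element of $F_m \cap \ball s$ admits $2k-1 = \lambda$ of them, the total number of reduced extension pairs equals $1 + \lambda \phi(s)$. Denoting by $B(s)$ the number of \emph{bad} pairs, namely $(w',a)$ with $w'a$ reduced but $w'a \notin F_m$, I would conclude
\begin{displaymath}
\phi(s+1) = 1 + (1 + \lambda \phi(s)) - B(s) = 2 + \lambda \phi(s) - B(s).
\end{displaymath}

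I would then bound $B(s)$. If $(w',a)$ is bad, then $w'a$ contains some $u^m$ for a non-trivial word $u$; because $w'$ itself is free of $m$-th powers, $u^m$ must sit as a \emph{suffix} of $w'a$, say $w'a = vu^m$. The prefix $v$ is then a prefix of $w'$, so $v \in F_m$, with $|v| = |w'|+1-m|u|$. Moreover $u^m$ being reduced forces $u$ to be cyclically reduced. For a fixed $j = |u|$, the pair $(v,u)$ determines $(w',a)$ uniquely (recover $a$ as the last letter of $u$), so grouping bad pairs by $j$ and bounding the number of cyclically reduced words of length $j$ by the total number of reduced words of that length, namely $2k(2k-1)^{j-1} = \tfrac{2k}{2k-1}\lambda^j$, gives
\begin{displaymath}
B(s) \leq \frac{2k}{2k-1}\sum_{j \geq 1} \lambda^j \phi(s+1-mj).
\end{displaymath}
Combining this with the identity for $\phi(s+1)$ above and dropping the harmless positive additive constant $2$ yields the announced inequality.

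The only delicate point is the observation that a forbidden $m$-th power in $w'a$ must appear as a suffix; this uses the $m$-aperiodicity of $w'$ itself and is exactly what makes the sum over $j$ converge properly and produces the precise constant $\tfrac{2k}{2k-1}$ in the statement. Everything else is a routine count of reduced extensions in a free group.
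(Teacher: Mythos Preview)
Your argument is correct and follows essentially the same route as the paper's proof: extend each $w'\in F_m\cap\ball s$ by one reduced letter, observe that a bad extension must end in a suffix $u^m$ with prefix $v\in F_m\cap\ball{s+1-mj}$, and bound the number of choices of $u$ of length $j$ by $2k(2k-1)^{j-1}$. You are in fact slightly more careful than the paper --- you track the exact count of reduced extensions (including the extra one at $w'=1$) to get an identity $\phi(s+1)=2+\lambda\phi(s)-B(s)$ before discarding the constant, and you note explicitly that $u$ is cyclically reduced --- but these refinements do not change the argument.
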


\begin{proof}
	Let $s \in \N$.
	An $m$-aperiodic word $w$ of length $s+1$ can be written $w = w'a$ where $w'$ is an $m$-aperiodic word of length $s$ and $a$ an element of $A\cup A^{-1}$. 
	Since $w$ is reduced the number of possible choices for $a$ is $2k-1$.
	On the other hand, consider a reduced of the form $w'a$ where $w' \in F_m \cap B(s)$ and $a \in A\cup A^{-1}$.
	If such a word is not $m$ aperiodic, then there exists $j \in \N^*$, $u \in \free k$ with $|u|=j$ and $p \in F_m \cap \ball{s+1-mj}$ such that $w'a=pu^m$.
	The number of words of this last form is bounded above by 
	\begin{displaymath}
		\card{F_m\cap \ball{s+1-mj}}.\card{\ann j0} \leq 2k(2k-1)^{j-1}\card{F_m\cap \ball{s+1-mj}}.
	\end{displaymath}
	Therefore the number of reduced words of the form $w'a$ ($w' \in F_m \cap \ball s$ and $a \in A\cup A^{-1}$) which are $m$-aperiodic is bounded below by
	\begin{displaymath}
		(2k-1) \card{F_m\cap \ball s} -  2k \sum_{j\geq 1} (2k-1)^{j-1}\card{F_m\cap \ball{s+1-mj}},
	\end{displaymath}
	which gives the desired conclusion.
\end{proof}

\begin{prop}
\label{res: growth F_m - free case}
	Let $k \geq 2$.
	For every $a > 2k$, there exists an number $m_0$ such that for every integer $m \geq m_0$  the exponential growth rate of $F_m$ is larger than $\lambda\left(1 - a\lambda^{-m}\right)$.
\end{prop}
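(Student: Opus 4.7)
The plan is to convert the recurrence of Proposition~\ref{res: min F_m with quantifiers - free case} into a single generating-function inequality. Set $f_s = \card{F_m \cap \ball s}$ and consider
\begin{displaymath}
	F(z) = \sum_{s \geq 0} f_s z^s, \qquad \phi(z) = 1 - \lambda z + \frac{2k}{2k-1} \cdot \frac{\lambda z^m}{1 - \lambda z^m} = 1 - \lambda z + \frac{2k}{2k-1}\sum_{j \geq 1}\lambda^j z^{mj}.
\end{displaymath}
Multiplying the recurrence by $z^{s+1}$, summing over $s \geq 0$ and rearranging yields the formal power-series identity $F(z)\phi(z) \geq 1$: that is, every coefficient of $F\phi - 1$ is nonnegative. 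By absolute convergence, this transfers to a pointwise inequality for real $z$ in the interval $0 \leq z < \min(1/\tilde{\mu},\, \lambda^{-1/m})$, where $\tilde{\mu} := \limsup_{s} f_s^{1/s}$ denotes the growth rate of $F_m$.

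Set $\mu = \lambda(1 - a\lambda^{-m})$. For $m$ large enough one has $\mu > \lambda^{1/m}$, so $\phi(1/\mu)$ is well defined. I then argue by contradiction: if the growth rate $\tilde{\mu}$ were strictly smaller than $\mu$, then $1/\mu$ would lie inside the radius of convergence of $F$, so $F(1/\mu)$ would be a finite positive real, and the inequality $F(1/\mu)\,\phi(1/\mu) \geq 1$ would force $\phi(1/\mu) > 0$. It therefore suffices to prove $\phi(1/\mu) < 0$ for every $m$ larger than some threshold $m_0 = m_0(a)$.

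For this final sign estimate, set $\epsilon = a\lambda^{-m}$. A direct expansion gives
\begin{displaymath}
	1 - \frac{\lambda}{\mu} = -\frac{\epsilon}{1-\epsilon} = -a\lambda^{-m}\bigl(1 + O(\lambda^{-m})\bigr),
\end{displaymath}
while $(1-\epsilon)^{-m} = 1 + o(1)$ (because $m\epsilon \to 0$) together with the identity $\frac{2k}{2k-1} = \frac{2k}{\lambda}$ yields
\begin{displaymath}
	\frac{2k}{2k-1}\cdot\frac{\lambda/\mu^m}{1 - \lambda/\mu^m} = \frac{2k}{\lambda}\cdot\lambda^{1-m}\bigl(1 + o(1)\bigr) = 2k\lambda^{-m}\bigl(1 + o(1)\bigr).
\end{displaymath}
Summing, $\phi(1/\mu) = (2k - a)\lambda^{-m} + o(\lambda^{-m})$, which is strictly negative once $m \geq m_0(a)$, since by hypothesis $a > 2k$. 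This closes the contradiction and proves the proposition.

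The main obstacle is exactly this sign computation: the constant $2k$ produced by the linearised recurrence (through the identity $\frac{2k}{2k-1}\cdot\lambda^{1-m} = 2k\lambda^{-m}$) matches the threshold precisely, so nothing weaker than the strict inequality $a > 2k$ would suffice to separate $\phi(1/\mu)$ from zero; the $o(\lambda^{-m})$ correction must be controlled carefully enough to keep the leading term in charge. The remaining steps — deriving $F\phi \geq 1$, checking coefficient positivity of $F\phi - 1$, and passing from formal to pointwise inequality — are routine manipulations of formal power series and cause no real difficulty.
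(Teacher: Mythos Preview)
Your argument is correct, but it takes a different route from the paper. The paper runs the recurrence of Proposition~\ref{res: min F_m with quantifiers - free case} as a direct induction: it introduces the auxiliary function $f_m(\mu)=\lambda-\dfrac{2k\mu}{\mu^m-\lambda}$, checks the asymptotic estimate $f_m(\mu_m)=\lambda(1-2k\lambda^{-m})+o(1)\geq \mu_m$ for $\mu_m=\lambda(1-a\lambda^{-m})$ and $m$ large, and then proves by induction on $s$ that $\card{F_m\cap\ball s}\geq \mu_m\card{F_m\cap\ball{s-1}}$, hence $\card{F_m\cap\ball s}\geq \mu_m^{\,s}$. You instead package the same recurrence into a single generating-function inequality $F(z)\phi(z)\geq 1$ (coefficientwise), evaluate at $z=1/\mu$, and obtain a contradiction from $\phi(1/\mu)<0$. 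The two arguments are dual: your condition $\phi(1/\mu)<0$, after multiplying through by $\mu$ and using $\lambda=2k-1$, is exactly the strict form of the paper's condition $f_m(\mu)\geq\mu$, and your asymptotic computation $(2k-a)\lambda^{-m}+o(\lambda^{-m})$ is the same one the paper performs. What the inductive approach buys is a pointwise bound $\card{F_m\cap\ball s}\geq\mu_m^{\,s}$ for every $s$, not merely a limsup estimate; what your approach buys is a cleaner, more mechanical derivation that avoids setting up the induction hypothesis by hand.
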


\begin{proof}
We consider the function $f_m : \left(\sqrt[m] \lambda, \lambda \right) \rightarrow \R$ defined by 
\begin{displaymath}
	f_m(\mu) = \lambda - \frac {2k\mu}{2k-1}\sum_{j \geq 1} \left(\frac \lambda {\mu^m}\right)^j = \lambda - \frac {2k\mu}{\mu^m - \lambda}
\end{displaymath}
Let $a > 2k$.
We put $\mu_m = \lambda\left(1 - a\lambda^{-m}\right)$.
The  sequence $\mu_m$ tends to $\lambda >1$ as $m$ approaches infinity.
Therefore
\begin{displaymath}
	f_m(\mu_m) = \lambda\left(1 - \frac {2k}{\lambda^m}\right) + \operatorname{o}\displaylimits_{m \rightarrow + \infty}(1).
\end{displaymath}
Since $a > 2k$ there exists a number $m_0$ such that for every integer $m \geq m_0$, $f_m(\mu_m) \geq \mu_m$.
Fix $m \geq m_0$.
For simplicity of notation we write $\mu$ for $\mu_m$.
We now prove by induction that for every $s \in \N$, $\card{F_m \cap \ball s} \geq \mu \card{F_m \cap \ball {s-1}}$.
The statement is true for $s=0$.
Assume that it holds for all integers less or equal to $s$.
In particular for every $j \geq 1$, $\card{F_m\cap \ball{s+1-mj}} \leq \mu^{1-mj}  \card{F_m \cap \ball s}$.
It follows then from Proposition~\ref{res: min F_m with quantifiers - free case} that 
\begin{displaymath}
	\card{F_m\cap \ball{s+1}} 
	\geq \left[\lambda - \frac {2k\mu}{(2k-1)} \sum_{j\geq 1}\left(\frac \lambda{\mu^m}\right)^j\right] \card{F_m\cap \ball s}.
\end{displaymath}
The expression between the brackets is exactly $f_m(\mu)$. 
Therefore the assumption holds for $s+1$.
A second induction proves that for every $s \in \N$, $\card{F_m \cap \ball s} \geq \mu^s$, which leads to the result.
\end{proof}

\subsection{The general case}
\label{sec:avoiding large powers - general case}

\paragraph{}
We now deal with the case of hyperbolic groups.
Recall that a cyclically reduced isometry is an element $g \in G$ such that $\len g= |g|$.
The set of non-trivial cyclically reduced elements is denoted by $C$ (see Section~\ref{sec: recall hyperbolic spaces}).
Let $L > 0$ and $m\in \N^*$.
Given $g \in G$, we say that $g$ contains a \textit{$(L,m)$-power}, if there exists $(l,w) \in G\times C$ such that both $l$ and $lw^m$ belong to the $L$-neighbourhood of $\sigma_g$.
If $g$ does not  contain any $(L,m)$-power it is called \textit{$(L,m)$-aperiodic}.

\rem Our definition of $(L,m)$-aperiodic elements is a slightly weaker form of the one of A.Y.~Ol'shanski\u\i\ \cite{Olc91}.
However it is sufficient to apply Ol'shanski\u\i's results (see the remark following Theorem~\ref{res: aperiodic one-to-one}).

\paragraph{}
Let $g \in G$ and $h$ be an element of $G$ which lies on $\sigma_g$.
By construction, the geodesic $\sigma_h$ is the subpath of $\sigma_g$ joining 1 and $h$. 
Therefore if $g$ is $(L,m)$-aperiodic, so is $h$.

\paragraph{}
Given a subset $F$ of $G$, we denote by $F_{L,m}$ the set of elements $g \in F$ which are $(L,m)$-aperiodic.
Our aim is to give a lower bound for the growth rate of $F_{L,m}$ for an appropriate subset $F$ of $G$.
More precisely we prove the following result.

\begin{prop}
\label{res: growth F_m}
	Let $L >0$. 
	There exist $a>0$ and $m_0 \in \N$ satisfying the following property.
	For all  integers $m \geq m_0$, there is a subset $F$ of $G$, such that the exponential growth rate of $F_{L,m}$ is larger than $\lambda \left(1-  a/m\right)$.
\end{prop}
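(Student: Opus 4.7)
The plan is to mimic the free-group argument of Proposition~\ref{res: growth F_m - free case}, but to replace ``extend by one letter'' with ``extend by a lexicographic geodesic of length about $r$'' inside the subset $F$ furnished by Proposition~\ref{res: all properties F}, and to couple $r$ to $m$. Concretely, I would fix $\kappa_0 \in (0, \tau/2)$ (with $\tau$ from Theorem~\ref{res: stable translation length discrete}) and take $r = \lfloor \kappa_0 m \rfloor$, so that for $m$ large enough $r > 10\delta$ and $r + 2L < m\tau$; Proposition~\ref{res: all properties F} then supplies $F \subset G$ with its three properties. Extending each $g \in F_{L,m} \cap \ball s$ via property (ii) produces at least $\nu \lambda^r$ elements $u \in L_g \cap g^{-1}F \cap \ann r{9\delta}$, each giving $gu \in F \cap \ball{s+r}$. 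For a fixed $gu \in F$, the admissible starting points $g$ on $\sigma_{gu}$ are parametrised by the integer values of $|g| = |gu| - |u|$ with $|u| \in [r - 9\delta, r]$, so each extension is over-counted at most $10\delta$ times. This yields
\begin{displaymath}
|F_{L,m} \cap \ball{s+r}| \geq \frac{1}{10\delta}\left(\nu \lambda^r \,|F_{L,m} \cap \ball s| - B(s)\right),
\end{displaymath}
where $B(s)$ counts pairs $(g, u)$ as above for which $gu$ is not $(L,m)$-aperiodic.

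\textbf{Bounding $B(s)$: the main obstacle.}
For each bad pair, pick a witness $(l, w) \in G \times C$ of the $(L,m)$-power in $gu$. After possibly swapping $(l, w) \leftrightarrow (lw^m, w^{-1})$, I may assume that the projection $p_2$ of $lw^m$ onto $\sigma_{gu}$ lies strictly beyond $g$. The condition $\len w = |w|$ (which holds since $w \in C$), combined with $\len w \leq \len[stable] w + 50\delta$ and Theorem~\ref{res: stable translation length discrete}, yields the decisive inequality $|w^m| \geq m \max(\tau, |w| - 50\delta)$; together with $r + 2L < m\tau$, this forces the projection $p_1$ of $l$ onto $\sigma_{gu}$ to land on $\sigma_g$ strictly before $g$. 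Property (iii) of $F$ applied at $p_1 \in \sigma_g$ then produces $h' \in F$ on $\sigma_g$ with $\dist{h'}{p_1} \leq r$, and aperiodicity of $g$ propagates to $h'$ since $\sigma_{h'} \subset \sigma_g$. Consequently every bad pair is covered by a tuple $(h', l, w, gu)$ with $h' \in F_{L,m}$, $l \in B(h', L+r)$, $w \in C$, and $gu \in B(lw^m, L+r)$. Bounding each ambient ball by $\alpha \lambda^{\bullet}$ via Proposition~\ref{res:size of balls} delivers
\begin{displaymath}
B(s) \leq C \lambda^{2(L+r)} \sum_{j \geq 1} \lambda^j \,\bigl|F_{L,m} \cap \ball{s + 2r + 2L - m \max(\tau, j - 50\delta)}\bigr|.
\end{displaymath}
This witness decomposition -- in particular the guarantee that $p_1$ sits on the old portion $\sigma_g$, so that the inductive control of $F_{L,m}$ can be pushed onto the ``prefix'' $h'$ -- is the technical heart of the argument.

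\textbf{Closing the induction.}
Following Proposition~\ref{res: growth F_m - free case}, I would set $\mu = \lambda(1 - a/m)$ for a constant $a$ to be chosen, and use as inductive hypothesis $|F_{L,m} \cap \ball{s'}| \leq \mu^{s'-s}\,|F_{L,m} \cap \ball s|$ for all $s' \leq s$. The sum bounding $B(s)$ then splits into two geometric series in $\lambda/\mu^m$ -- one over $j \leq 50\delta + \tau$, one over larger $j$ -- both exponentially small in $m$ since $\kappa_0 < \tau/2$. For $m$ sufficiently large this gives $B(s) \leq \tfrac{\nu}{2}\lambda^r\,|F_{L,m} \cap \ball s|$, so the recursion reduces to $|F_{L,m} \cap \ball{s+r}| \geq \tfrac{\nu}{20\delta}\lambda^r\,|F_{L,m} \cap \ball s|$. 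Since $r = \Theta(m)$, the factor $\bigl(\nu/(20\delta)\bigr)^{1/r}$ equals $1 - O(1/m)$, yielding the announced lower bound $\lambda(1 - a/m)$ on the growth rate of $F_{L,m}$.
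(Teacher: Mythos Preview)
Your argument is essentially the paper's own: both couple $r$ linearly to $m$, invoke Proposition~\ref{res: all properties F} to produce $F$, extend aperiodic elements of $F$ by length $r$, carry out the same witness decomposition for the bad extensions (which the paper packages as the inclusion $Z\setminus\bigcup_{w\in C} Z_w\subset F_{L,m}$, Proposition~\ref{res: Z-Zw subset F_n}, with auxiliary Lemmas~\ref{res: minoration card Z}--\ref{res: majoration Z_w}), and close a geometric-series induction on $\card{F_{L,m}\cap\ball{ir}}$ against $\mu=\lambda(1-a/m)$. The only differences are cosmetic: the paper handles the overcount via the Arzhantseva--Lysenok separation lemma (Lemma~\ref{res: separate elements}) instead of your direct $10\delta$ bound, sums over $\len[stable] w=j\tau$ rather than over $|w|=j$, and fixes $r=m\tau/4$; note also that since $h'$ lies between $1$ and $p_1$ on $\sigma_{gu}$ one actually gets $|h'|\leq s+r+2L-m\max(\tau,j-50\delta)$ rather than $s+2r+\cdots$, and with this correction your constraint $\kappa_0<\tau/2$ is exactly the right one.
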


\paragraph{}
The rest of this section is dedicated to the proof of Proposition~\ref{res: growth F_m}.
We first need to fix some parameters.
The constants $\tau$ and $\nu$ are the ones respectively given by Theorem~\ref{res: stable translation length discrete} and Proposition~\ref{res: all properties F}.
First we take $L >0$.
Its value will be made precise in the next section (see Theorem~\ref{res: aperiodic one-to-one}).
Let $r$ be a real number larger than $10 \delta$ and $m$ an integer such that $m \tau \geq 2 L + r$.
According to Proposition~\ref{res: all properties F}, there exists a subset $F$ of $G$ containing 1, such that 
\begin{enumerate}
	\item for all $g \in F$, $\card{L_g \cap g^{-1}F \cap \ann r{9\delta}} \geq \nu \lambda^r$,
	\item for all $g \in F$, for all $x\in \sigma_g$, there exists $h \in F$ which lies on $\sigma_g$ between 1 and $x$ such that $\dist xh \leq r$.
\end{enumerate}\
We now define auxiliary subsets of $G$.
\begin{itemize}
	\item $Z = \set{hu}{h \in F_{L,m}, u \in L_h \cap h^{-1}F \cap \ann r{9\delta}}$.
	(Note that $Z \subset F$.)
	\item For all $w \in G\setminus\{1\}$, $Z_w$ is the set of elements $g\in G$ that can be written $g=huw^mu'$ where
	\begin{enumerate}
		\item $h$ belongs to $F_{L,m}$,
		\item $|g| \geq |h| + |w^m| - 2L$.
		\item $|u| ,|u'|\leq L + r$.
	\end{enumerate}
\end{itemize}
Roughly speaking $Z$ denotes the set of elements of $F$ which geodesically extend a $(L,m)$-aperiodic isometry by a length $r$.
On the other hand, $Z_w$ contains the elements which extend a $(L,m)$-aperiodic isometry by a $m$-th power of $w$.
The idea of the next proposition is the following.
An $(L,m)$-aperiodic isometry of length $s+r$ can be obtained by extending a $(L,m)$-aperiodic isometry of length $s$.
However this extension should not involve a $m$-th power.

\begin{prop}
\label{res: Z-Zw subset F_n}
	The set $Z \setminus \bigcup_{w \in C} Z_w$ is contained in $F_{L,m}$.
\end{prop}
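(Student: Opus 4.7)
The plan is to argue by contradiction. Take $g \in Z$; write $g = hu$ with $h \in F_{L,m}$ and $u \in L_h \cap h^{-1}F \cap \ann{r}{9\delta}$, so that $\sigma_g$ passes through $h$ and $|g|-|h| = |u| \leq r$. Assume toward contradiction that $g$ is not $(L,m)$-aperiodic: there exist $l \in G$ and $w \in C$ with $l, lw^m$ both within distance $L$ of $\sigma_g$. I will show that the $m$-th power either fits inside (a neighborhood of) $\sigma_h$, contradicting $h \in F_{L,m}$, or else sits far enough toward the end of $\sigma_g$ that $g \in Z_w$, contradicting the standing assumption $g \notin \bigcup_{w \in C} Z_w$.

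Concretely, I would pick closest points $p_1, p_2 \in \sigma_g$ to $l$ and $lw^m$ respectively, set $s_i = |p_i|$, and after relabeling assume $s_1 \leq s_2$. Since $\sigma_g$ is geodesic and $\dist{l}{lw^m} = m|w|$, the triangle inequality gives $s_2 - s_1 \geq m|w| - 2L$. The easy case is $s_2 \leq |h|$: then both $p_1$ and $p_2$ lie on the initial subpath $\sigma_h$ of $\sigma_g$, so $l$ and $lw^m$ are within $L$ of $\sigma_h$, and $h$ itself would contain an $(L,m)$-power---contradicting $h \in F_{L,m}$.

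Suppose instead $s_2 > |h|$. The key step is to produce $h' \in F_{L,m}$ on $\sigma_g$ satisfying $|h'| \leq s_1$ and $\dist{p_1}{h'} \leq r$: if $s_1 \geq |h|$ take $h' = h$, and otherwise apply property (iii) of Proposition~\ref{res: all properties F} at the point $p_1$, in which case the resulting $h'$ is a prefix of $h$ on $\sigma_g$ and therefore inherits $(L,m)$-aperiodicity from $h$. Setting $u_1 = h'^{-1}l$ and $u_2 = (lw^m)^{-1}g$ gives the group identity $g = h' u_1 w^m u_2$. The bounds $|u_1| \leq \dist{h'}{p_1} + L \leq L + r$ and $|u_2| \leq L + (|g| - s_2) \leq L + r$ (using $s_2 > |h|$ and $|g|-|h| \leq r$), combined with $|h'| \leq s_1 \leq s_2 - m|w| + 2L \leq |g| - m|w| + 2L$, verify all three conditions in the definition of $Z_w$. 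Hence $g \in Z_w$, the sought contradiction.

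The main delicate point is that one cannot always take $h' = h$: when the $m$-th power straddles the transition between $\sigma_h$ and the extension $u$, the element $h$ itself may fail the length condition $|g| \geq |h| + |w^m| - 2L$ (roughly because $|u|$ could be much smaller than $m|w|$). Property (iii) of $F$ is precisely the tool that lets us retreat to an earlier prefix $h' \in F_{L,m}$ of $h$ on $\sigma_g$ which simultaneously meets the aperiodicity requirement (by being a prefix of $h$) and the $Z_w$-length requirement.
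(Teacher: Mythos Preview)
Your argument is correct and follows essentially the same route as the paper's proof: argue by contrapositive, project $l$ and $lw^m$ onto $\sigma_g$, use the element $h\in F_{L,m}$ coming from the definition of $Z$ to rule out powers lying entirely in $\sigma_h$, and otherwise invoke property~(iii) of $F$ to produce a suitable $h'\in F_{L,m}$ and verify the three conditions defining $Z_w$. One small slip: $\dist{l}{lw^m}=|w^m|$, not $m|w|$ (these need not coincide in a hyperbolic group even for cyclically reduced $w$); since condition~(ii) in $Z_w$ is stated with $|w^m|$, replacing $m|w|$ by $|w^m|$ throughout your computation fixes this harmlessly.

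The only organizational difference is in how you certify $h'\in F_{L,m}$. The paper always applies property~(iii) at the projection $p$ and then uses the standing hypothesis $m\tau\geq 2L+r$ (its Claim~3) to force the resulting point to lie on $\sigma_{g'}$, hence to be $(L,m)$-aperiodic. You instead split into the subcases $s_1\geq |h|$ (take $h'=h$) and $s_1<|h|$ (apply property~(iii), and then $h'$ is automatically a prefix of $h$); this bypasses any appeal to $m\tau\geq 2L+r$ inside the present proof. Both variants are equally valid.
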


\begin{proof}
	Equivalently we prove that $Z \setminus F_{L,m} \subset \bigcup_{w \in C} Z_w$.
	Let $g$ be an element of $Z\setminus F_{L,m}$.
	In particular it belongs to $F\setminus F_{L,m}$.
	Therefore there exist $l \in G$ and $w \in C$ such that $l$ and $lw^m$ belong to the $L$-neighbourhood of $\sigma_g$.
	We denote by $p$ and $q$ respective projections of $l$ and $lw^m$ on $\sigma_g$.
	By replacing if necessary $w$ by $w^{-1}$ one can assume that $1$ $p$, $q$ and $g$ are ordered in this way on $\sigma_g$.
	
	\paragraph{Claim 1:} $\dist gq \leq r$.
	Assume on the contrary that this assertion is false.
	Since $g$ belongs to $Z$ there is $g' \in F_{L,m} \cap \sigma_g$ such that 
	\begin{displaymath}
		r- 9 \delta \leq \dist g{g'} \leq  r < \dist gq.
	\end{displaymath}
	It follows that $p$ and $q$ both belong to $\sigma_{g'}$.
	In particular $l$ and $lw^m$ lie in the $L$-neighbourhood of $\sigma_{g'}$.
	This contradicts the fact that $g'$ is $(L,m)$-aperiodic.
	
	\paragraph{}
	Recall that $g$ belongs to $Z \subset F$.
	The point $p$ is on $\sigma_g$.
	Therefore there is a point $h \in F$ which lies on $\sigma_g$ between 1 and $p$ such that $\dist hp \leq r$ (see Figure~\ref{fig: main lemma}).
	We put $u = h^{-1}l$ and $u'= w^{-m}l^{-1}g$.
	Hence $g=huw^mu'$.
	
	\begin{figure}[ht]
	\centering
		\includegraphics{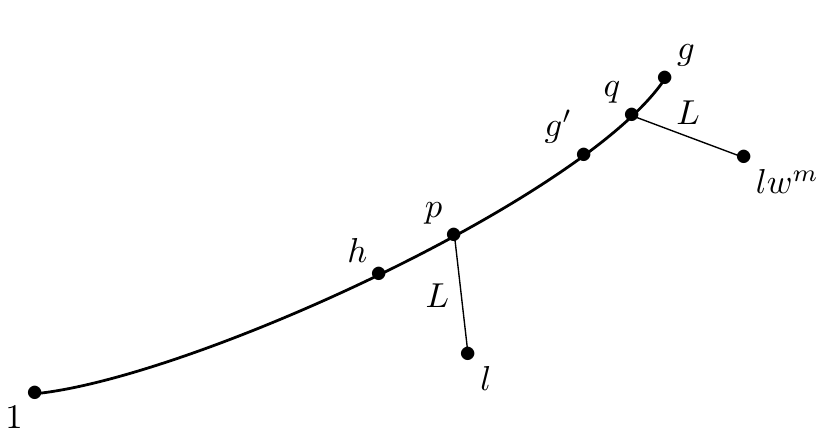}
	\caption{Positions of the points $h$, $p$, $g'$ and $q$.}
	\label{fig: main lemma}
	\end{figure}
	
	\paragraph{Claim 2:} $|g| \geq |h| + |w^m| - 2L$. 
	The points $p$ and $q$ lie on $\sigma_g$ between $g$ and $h$.
	Thus $|g| \geq |h| + \dist pq$.
	Recall that $p$ and $q$ are the respective projections of $l$ and $lw^m$ on $\sigma_g$.
	In particular $\dist lp, \dist {lw^m}q \leq L$.
	It follows from the triangle inequality that $|g| \geq |h| + |w^m| - 2L$.
	
	\paragraph{Claim 3:} The isometry $h$ belongs to $F_{L,m}$.
	By construction $h$ belongs to $F$.
	It is sufficient to prove that $h$ is $(L,m)$-aperiodic.
	We assumed that $m\tau \geq 2L +r$.
	The previous inequality becomes
	\begin{displaymath}
		|g| \geq |h| + |w^m| - 2L \geq |h| + m\tau -2L \geq |h| + r.
	\end{displaymath}
	It follows that $\dist gh \geq r \geq \dist g{g'}$. 
	In other words $h$ belongs to $\sigma_{g'}$.
	Since $g'$ is $(L,m)$-aperiodic, so is $h$.

	\paragraph{Claim 4:} $|u|, |u'| \leq L + r$.
	By construction of $h$, 
	\begin{displaymath}
		|u| = \dist lh \leq \dist lp + \dist ph \leq L +r.
	\end{displaymath}
	On the other hand, using our first claim, 
	\begin{displaymath}
		|u'| = \dist g{lw^m} \leq \dist gq + \dist q{lw^m} \leq L + r.
	\end{displaymath}

	\paragraph{} Claims 2 to 4 exactly say that $g \in Z_w$, which concludes the proof.
\end{proof}

\begin{coro}
\label{res: coro Z-Zw subset F_m}
	For all $s \geq 0$, 
	\begin{displaymath}
		\card{F_{L,m}\cap \ball s} \geq \card{Z\cap \ball s} - \sum_{w\in C} \card{Z_w\cap \ball s}.
	\end{displaymath}
\end{coro}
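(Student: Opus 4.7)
The corollary is a counting reformulation of Proposition~\ref{res: Z-Zw subset F_n}, so the plan is essentially a one-line argument. First I would intersect the set inclusion $Z \setminus \bigcup_{w \in C} Z_w \subseteq F_{L,m}$ with the ball $\ball s$, take cardinalities, and apply subadditivity of cardinality to the union $\bigcup_{w \in C} Z_w$. Concretely, subadditivity yields
\begin{displaymath}
	\card{Z \cap \ball s} \leq \card{\left(Z \setminus \bigcup_{w \in C} Z_w\right) \cap \ball s} + \sum_{w \in C} \card{Z_w \cap \ball s}.
\end{displaymath}
Proposition~\ref{res: Z-Zw subset F_n} bounds the first summand on the right by $\card{F_{L,m} \cap \ball s}$, and rearranging produces the stated inequality.

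The only step requiring a word of justification is the finiteness of the sum over $w \in C$, which is the one mild obstacle since $C$ is an infinite set. I would argue as follows: if $Z_w \cap \ball s$ is non-empty, pick $g$ in it, and use the defining inequality of $Z_w$ to obtain $|w^m| \leq |g| + 2L \leq s + 2L$. Since $w \in C$ is cyclically reduced, $|w| = \len w \leq \len[stable] w + 50\delta$, and the subadditivity of $n \mapsto |w^n|$ forces $m\len[stable] w \leq |w^m|$. Combining these estimates yields $|w| \leq (s+2L)/m + 50\delta$, so only finitely many $w \in C$ produce a non-empty $Z_w \cap \ball s$ (as $G$ is finitely generated), and the sum in the corollary is a genuine finite sum of integers. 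Beyond this bookkeeping, no real obstacle is expected.
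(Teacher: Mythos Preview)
Your proposal is correct and matches the paper's (implicit) approach: the corollary is stated without proof in the paper, as an immediate counting consequence of Proposition~\ref{res: Z-Zw subset F_n}, exactly via the subadditivity argument you outline. Your extra paragraph on the finiteness of the sum over $w\in C$ is a welcome addition; the paper only remarks on this later (after Lemma~\ref{res: majoration Z_w}), and your bound $|w|\leq (s+2L)/m + 50\delta$ is the right way to see it.
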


\begin{lemm}
\label{res: minoration card Z}
	For all $s\geq 0$,
	\begin{math}
		\card{Z\cap \ball{s+r}} \geq \rho\nu \lambda^r\card{F_{L,m}\cap \ball s}.
	\end{math}
\end{lemm}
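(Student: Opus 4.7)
The strategy is to associate to each $(L,m)$-aperiodic element $h$ of length at most $s$ a large family of geodesic extensions that produce distinct elements of $Z \cap \ball{s+r}$, and to control overlaps between families with different basepoints via the separation lemma of Arzhantseva-Lysenok.

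Concretely, my plan is as follows. First I would apply Lemma~\ref{res: separate elements} to the finite set $P = F_{L,m} \cap \ball s$ to extract a subset $P' \subset P$ with $\card{P'} \geq \rho \card P$ and pairwise distances strictly larger than $20\delta$. Since $F_{L,m} \subset F$, Proposition~\ref{res: all properties F}(ii) guarantees that for every $h \in P'$ the set $U_h = L_h \cap h^{-1}F \cap \ann r{9\delta}$ has cardinality at least $\nu \lambda^r$. For any $u \in U_h$, the product $hu$ lies in $Z$ by definition, and since $u \in L_h$ implies $|hu| = |h| + |u| \leq s + r$, it belongs to $Z \cap \ball{s+r}$.

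It then remains to check that the map $(h,u) \mapsto hu$ defined on the disjoint union $\bigsqcup_{h \in P'} U_h$ is injective. Suppose $hu = h'u' = g$ with $h, h' \in P'$, $u \in U_h$ and $u' \in U_{h'}$. By definition of the lexicographic type, both $h$ and $h'$ lie on $\sigma_g$, hence $\dist h{h'} = \bigl||h| - |h'|\bigr| = \bigl||u| - |u'|\bigr|$. Since $|u|$ and $|u'|$ both belong to $[r-9\delta, r]$, this distance is at most $9\delta$, which is incompatible with the $20\delta$-separation of $P'$ unless $h = h'$, and then $u = u'$. Summing the contributions over $h \in P'$ yields
\begin{displaymath}
	\card{Z \cap \ball{s+r}} \geq \sum_{h \in P'} \card{U_h} \geq \rho \card P \cdot \nu \lambda^r = \rho \nu \lambda^r \card{F_{L,m}\cap \ball s}.
\end{displaymath}

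The main obstacle I expect is precisely the potential collision $hu = h'u'$ for distinct basepoints: two elements on the same lexicographic geodesic with similar distances to the endpoint $g$ can differ by only $9\delta$, so an arbitrary selection of basepoints will not produce a disjoint union. The separation step of Lemma~\ref{res: separate elements} is exactly what neutralizes this, and working with lexicographic geodesics rather than arbitrary ones is essential, since it is what forces two coinciding extensions to share a single underlying path.
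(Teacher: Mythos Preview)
Your argument is correct and essentially identical to the paper's own proof: apply the Arzhantseva--Lysenok separation lemma to $F_{L,m}\cap\ball s$, use the defining property of $F$ to get at least $\nu\lambda^r$ extensions for each selected basepoint, and rule out collisions by observing that two basepoints on the same lexicographic geodesic with extension lengths in $\ann r{9\delta}$ differ by at most $9\delta$. The paper carries out exactly these steps with the same estimates.
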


\rem Recall that $\rho$ and $\nu$ are respectively given by Lemma~\ref{res: separate elements} and Proposition~\ref{res: all properties F} whereas $r$ is the radius that we fixed at the begin of this section.

\begin{proof}
	Applying Lemma~\ref{res: separate elements} there exists a subset $P$ of $F_{L,m}\cap \ball {s}$ such that $\card P \geq \rho \card{F_{L,m}\cap\ball s}$ whereas for all distinct $h,h' \in P$, $\dist h{h'} >20 \delta$.
	By definition of $Z$,
	\begin{displaymath}
		\bigcup_{h \in P}h\left(\fantomB L_h\cap h^{-1}F\cap \ann r{9\delta}\right) \subset Z\cap \ball{s+r}.
	\end{displaymath}
	We claim that this union is in fact a disjoint union.
	Assume on the contrary that this assertion is false.
	In particular, there are two distinct elements $h,h' \in P$ and $u,u' \in \ann r{9\delta}$ such that $h$ and $h'$ lie on the lexicographic geodesic from 1 to $hu=h'u'$.
	Since $u$ and $u'$ belongs to $\ann r{9\delta}$, $\dist h{h'}=\dist{|h|}{|h'|}\leq 9 \delta$.
	However $h,h' \in P$, thus $h=h'$.
	Contradiction.
	Therefore we have
	\begin{displaymath}
		\card{Z\cap \ball{s+r}} \geq \sum_{h \in P}\card{L_h\cap h^{-1}F\cap \ann r{9\delta}}.
	\end{displaymath}
	Nevertheless $P \subset F$.
	It follows from Proposition~\ref{res: all properties F}, that for all $h \in P$, $\card{L_h\cap h^{-1}F\cap \ann r{9\delta}} \geq \nu \lambda^r$.
	Consequently
	\begin{displaymath}
		\card{Z\cap \ball{s+r}}  \geq \nu \lambda^r \card P \geq  \rho\nu \lambda^r \card{F_{L,m}\cap \ball s}.
	\end{displaymath}
\end{proof}

\begin{lemm}
\label{res: majoration Z_w}
	For all $w \in G\setminus\{1\}$, for all $s \geq 0$, 
	\begin{displaymath}
		\card{Z_w\cap \ball{s+r}} \leq \alpha^2 \lambda^{2(L+r)}\card{F_{L,m}\cap \ball {s+r + 2L - m\len[stable]w}}.
	\end{displaymath}
\end{lemm}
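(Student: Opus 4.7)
The plan is to bound the cardinality of $Z_w \cap \ball{s+r}$ by counting the number of valid decompositions $g = h u w^m u'$ arising from the definition of $Z_w$. Since every element of $Z_w$ admits at least one such decomposition, it suffices to bound the number of triples $(h,u,u')$ satisfying the constraints.

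First I would extract the length restriction on $h$. If $g \in Z_w \cap \ball{s+r}$, the defining inequality $|g| \geq |h| + |w^m| - 2L$ gives
\begin{displaymath}
	|h| \leq |g| - |w^m| + 2L \leq s + r + 2L - |w^m|.
\end{displaymath}
Since $w$ is a non-trivial element of a torsion-free hyperbolic group, $|w^m| \geq \len[stable]{w^m} = m\len[stable]w$, so $|h| \leq s + r + 2L - m\len[stable]w$. Combined with the fact that $h \in F_{L,m}$, this shows that $h$ must lie in $F_{L,m} \cap \ball{s+r+2L-m\len[stable]w}$.

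Next I would count. Given that $|u|,|u'| \leq L + r$, both $u$ and $u'$ range over $\ball{L+r}$, and by Proposition~\ref{res:size of balls} we have $\card{\ball{L+r}} \leq \alpha \lambda^{L+r}$. Therefore the number of admissible pairs $(u,u')$ is bounded by $\alpha^2 \lambda^{2(L+r)}$. Multiplying by the number of admissible $h$ yields
\begin{displaymath}
	\card{Z_w \cap \ball{s+r}} \leq \card{F_{L,m} \cap \ball{s+r+2L-m\len[stable]w}} \cdot \alpha^2 \lambda^{2(L+r)},
\end{displaymath}
which is the desired inequality.

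There is essentially no hard step here: the bound is a direct counting argument, and the only substantive input beyond Definition of $Z_w$ is Proposition~\ref{res:size of balls} on the size of balls, together with the standard inequality $|w^m| \geq m \len[stable] w$. If the right-hand exponent $s + r + 2L - m\len[stable]w$ happens to be negative, the ball is empty and the inequality still holds trivially (with both sides zero when no decomposition exists). The only point requiring a moment's care is that the counting is of triples rather than elements, which is fine because we want an upper bound on $\card{Z_w\cap\ball{s+r}}$.
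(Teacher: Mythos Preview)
Your proof is correct and follows essentially the same approach as the paper: both argue that $Z_w\cap\ball{s+r}$ is contained in the product set $\bigl(F_{L,m}\cap\ball{s+r+2L-m\len[stable]w}\bigr)\cdot\ball{L+r}\cdot w^m\cdot\ball{L+r}$, using $|w^m|\geq m\len[stable]w$ to bound $|h|$, and then apply Proposition~\ref{res:size of balls} to the two ball factors.
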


\begin{proof}
	Let $g$ be an element of $Z_w\cap \ball{s+r}$.
	By definition there are $h \in F_{L,m}$ and $u,u' \in \ball{L +r}$ such that $g=huw^mu'$. 
	Moreover $|g| \geq |h| + |w^m| -2 L \geq |h| + m \len[stable] w - 2 L$.
	Consequently $h$ belongs to $\ball{s+r + 2L - m\len[stable]w}$.
	Hence $Z_w\cap \ball{s+r}$ is a subset of
	\begin{displaymath}
		\left(\fantomB F_{L,m}\cap \ball {s+r + 2L - m\len[stable]w}\right)\ball{L +r}w^m\ball{L +r}.
	\end{displaymath}
	The conclusion follows from Proposition~\ref{res:size of balls}.
\end{proof}

Let us summarize.
Let $s\in \R_+$.
By Corollary~\ref{res: coro Z-Zw subset F_m} and Lemma~\ref{res: minoration card Z}, 
\begin{eqnarray}
	\card{F_{L,m}\cap \ball {s+r}}
	& \geq & \card{Z\cap \ball {s+r}} - \sum_{w\in C} \card{Z_w\cap \ball {s+r}}\\
	& \geq & \rho\nu \lambda^r\card{F_{L,m}\cap \ball s} - \sum_{w\in C} \card{Z_w\cap \ball {s+r}}.
	\label{eqn: min F_m with quantifiers 1}
\end{eqnarray}
Let $j \in \N^*$.
We consider $w \in C$ such that $\len[stable] w = j\tau$ (see Theorem~\ref{res: stable translation length discrete}).
By Lemma~\ref{res: majoration Z_w},
\begin{displaymath}
	 \card{Z_w\cap \ball {s+r}}
	 \leq \alpha^2 \lambda^{2(L + r)} \card{F_{L,m}\cap \ball {s+r+ 2L - mj\tau}}.
\end{displaymath}
Since $w$ is cyclically reduced it satisfies $|w| = \len w \leq \len[stable] w +50 \delta$.
Hence such an isometry belongs to $\ball{j\tau +50\delta}$.
The number of elements $w \in C$ such that $\len[stable]w = j\tau$ is therefore bounded above by $\card{\ball{j\tau+50\delta}}\leq \alpha \lambda^{j\tau+50\delta}$.
Consequently
\begin{displaymath}
	\sum_{w\in C} \card{Z_w\cap \ball {s+r}}
	\leq  \alpha^3 \lambda^{2(L + r+25\delta)} \sum_{j\geq 1} \lambda^{j\tau} \card{F_{L,m}\cap \ball {s+r+ 2L - mj\tau}}.
\end{displaymath}
Note that in the sum on the right-hand side all but finitely many terms vanish.
Combining this last inequality with~(\ref{eqn: min F_m with quantifiers 1}) we get
\begin{eqnarray*}
	\lefteqn{\card{F_{L,m}\cap \ball{s+r}} \geq} \\
	&  & \rho\nu \lambda^r \card{F_{L,m}\cap \ball s} -\alpha^3 \lambda^{2(L + r+25\delta)}  \sum_{j\geq 1} \lambda^{j\tau}\card{F_{L,m}\cap \ball{ s+r+2L-mj\tau}}.
\end{eqnarray*}
Finally we proved the following proposition.

\begin{prop}
\label{res: min F_m with quantifiers}
	There exist positive constants $\tau$, $\kappa_1$ and $\kappa_2$ which only depends on $G$ and $A$ with the following property.
	Let $L > 0$.
	Let $r$ be a real number larger than $10 \delta$.
	There exists a subset $F$ of $G$ containing 1 such that for all integers $m$ satisfying $m\tau >  2L +r$, for all $s \in \R_+$,
	\begin{eqnarray*}
		\lefteqn{\card{F_{L,m}\cap \ball{s+r}} \geq} \\
		&  & \kappa_1 \lambda^r \card{F_{L,m}\cap \ball s} - \kappa_2 \lambda^{2(L+ r)} \sum_{j\geq 1} \lambda^{j\tau}\card{F_{L,m}\cap \ball{ s+r+2L-mj\tau}}.
	\end{eqnarray*}
\end{prop}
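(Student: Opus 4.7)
The plan is to assemble the pieces proved in the build-up above. First I would take $F$ to be the set provided by Proposition~\ref{res: all properties F} for the radius $r$, and consider the auxiliary sets $Z$ and $Z_w$ defined before Proposition~\ref{res: Z-Zw subset F_n}. From Corollary~\ref{res: coro Z-Zw subset F_m} we already have, for every $s \geq 0$,
\begin{displaymath}
	\card{F_{L,m}\cap \ball{s+r}} \geq \card{Z \cap \ball{s+r}} - \sum_{w \in C} \card{Z_w \cap \ball{s+r}},
\end{displaymath}
so the task reduces to plugging in the lower bound on $\card{Z\cap \ball{s+r}}$ and the upper bound on $\card{Z_w\cap \ball{s+r}}$, then organising the sum over $C$ by stable translation length.

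For the positive term, Lemma~\ref{res: minoration card Z} gives directly $\card{Z\cap \ball{s+r}} \geq \rho\nu \lambda^r \card{F_{L,m}\cap \ball s}$, so I would set $\kappa_1 = \rho\nu$. For the negative term, Lemma~\ref{res: majoration Z_w} bounds each individual summand by $\alpha^2 \lambda^{2(L+r)} \card{F_{L,m}\cap \ball{s+r+2L-m\len[stable]w}}$. The key step is then to group the cyclically reduced elements according to their stable translation lengths: by Theorem~\ref{res: stable translation length discrete}, $\len[stable]w \in \tau\N$, so one can partition $C$ into the slices $C_j = \{w \in C \mid \len[stable]w = j\tau\}$ for $j \geq 1$. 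Since every $w \in C_j$ is cyclically reduced, $|w| = \len w \leq \len[stable]w + 50\delta = j\tau + 50\delta$, so Proposition~\ref{res:size of balls} yields $\card{C_j} \leq \alpha \lambda^{j\tau+50\delta}$.

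Summing over the slices gives
\begin{displaymath}
	\sum_{w \in C} \card{Z_w\cap \ball{s+r}} \leq \alpha^3 \lambda^{50\delta} \lambda^{2(L+r)} \sum_{j \geq 1} \lambda^{j\tau} \card{F_{L,m}\cap \ball{s+r+2L-mj\tau}},
\end{displaymath}
so setting $\kappa_2 = \alpha^3 \lambda^{50\delta}$ yields exactly the stated inequality, with $\tau$ the constant from Theorem~\ref{res: stable translation length discrete}. The hypothesis $m\tau > 2L+r$ is needed because it is already used inside Proposition~\ref{res: Z-Zw subset F_n} to guarantee that the element $h$ extracted from a factorisation $g = huw^mu'$ is itself $(L,m)$-aperiodic; it also ensures that the sum on the right-hand side is finite, since $\card{F_{L,m}\cap \ball t}=0$ for $t<0$. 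The only real point of care, which has however been dispatched by the preceding lemmas, is to keep track simultaneously of the $\lambda^{2(L+r)}$ factor coming from the freedom in the prefix and suffix lengths $|u|,|u'| \leq L+r$ and of the growth of $C$ as a function of stable translation length; the discreteness provided by Theorem~\ref{res: stable translation length discrete} is exactly what makes this aggregation tractable.
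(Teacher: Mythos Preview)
Your proposal is correct and follows the paper's proof essentially line for line: the same set $F$ from Proposition~\ref{res: all properties F}, the same decomposition via Corollary~\ref{res: coro Z-Zw subset F_m}, the same use of Lemmas~\ref{res: minoration card Z} and~\ref{res: majoration Z_w}, and the same grouping of $C$ by stable translation length using Theorem~\ref{res: stable translation length discrete} together with the bound $|w|\leq \len[stable]w+50\delta$; your constants $\kappa_1=\rho\nu$ and $\kappa_2=\alpha^3\lambda^{50\delta}$ coincide with the paper's. One tiny quibble: the finiteness of the sum does not actually require $m\tau>2L+r$ (any $m\tau>0$ makes the radii eventually negative), so that hypothesis is only needed for the aperiodicity argument inside Proposition~\ref{res: Z-Zw subset F_n}.
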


Before proving Proposition~\ref{res: growth F_m}, we introduce a family of auxiliary maps.
For all $L \geq 0$, for all $m \in \N^*$, the function $f_{L,m} : \left(\sqrt[m]\lambda, \lambda \right) \times \R_+ \rightarrow \R$ is given by 
\begin{eqnarray*}
	f_{L,m}(\mu,r) 
	& = & \kappa_1 \lambda^r  - \kappa_2 \lambda^{4(L + r)} \sum_{j\geq 1} \left(\frac{\lambda}{\mu^m}\right)^{ j\tau}\\
	& = &  \kappa_1 \lambda^r  - \kappa_2 \lambda^{4(L + r)} \frac{\lambda^\tau}{\mu^{m\tau} -\lambda^\tau}
\end{eqnarray*}

\begin{prop}
\label{res: min map f}	
	Let $L \geq 0$.
	There exists $m_0 \in \N$ such that for all integers $m \geq m_0$,
	\begin{displaymath}
		f_{L,m}\left( \lambda \left(1- \frac am\right), mb \right) \geq \left[\lambda \left(1- \frac am\right)\right]^{mb}.
	\end{displaymath}
	where  $a = 5(1-\kappa_1)/\tau$ and $b = \tau/4$.
\end{prop}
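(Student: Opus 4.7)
The plan is to prove the inequality by direct asymptotic analysis as $m \to \infty$, after substituting $\mu_m := \lambda(1-a/m)$ and $r_m := mb = m\tau/4$. The crucial algebraic match is $4r_m = m\tau$, so the factor $\lambda^{4(L+r_m)}$ appearing in $f_{L,m}$ will precisely compensate the growth of $\mu_m^{m\tau}$ in the denominator.

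First, I bound the subtracted term in $f_{L,m}(\mu_m, r_m)$. Since $(1-a/m)^{m\tau}$ converges to $e^{-a\tau} > 0$ from below as $m\to\infty$, there is a constant $c = c(a,\tau) > 0$ with $\mu_m^{m\tau} - \lambda^\tau \geq c\lambda^{m\tau}$ for all sufficiently large $m$. Using $4(L+r_m) = 4L + m\tau$ this gives
\begin{displaymath}
\kappa_2\lambda^{4(L+r_m)}\frac{\lambda^\tau}{\mu_m^{m\tau}-\lambda^\tau} \leq \frac{\kappa_2}{c}\lambda^{4L+\tau} =: C,
\end{displaymath}
a constant independent of $m$. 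Hence $f_{L,m}(\mu_m, r_m) \geq \kappa_1\lambda^{mb} - C$.

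Next I bound $\mu_m^{r_m}$ from above. The elementary inequality $1-x\leq e^{-x}$ gives $\mu_m^{r_m} = \lambda^{mb}(1-a/m)^{mb} \leq \lambda^{mb}e^{-ab}$, and by the choice of the constants $ab = 5(1-\kappa_1)/4$, hence
\begin{displaymath}
\mu_m^{r_m} \leq \lambda^{mb}e^{-5(1-\kappa_1)/4}.
\end{displaymath}
Combining the two estimates, the target inequality $f_{L,m}(\mu_m,r_m)\geq\mu_m^{r_m}$ reduces to
\begin{displaymath}
\bigl(\kappa_1 - e^{-5(1-\kappa_1)/4}\bigr)\lambda^{mb}\geq C.
\end{displaymath}
Since $\lambda > 1$, the left-hand side grows exponentially in $m$, so picking $m_0$ large enough suffices, provided the coefficient $\kappa_1 - e^{-5(1-\kappa_1)/4}$ is strictly positive.

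The last step is to verify this scalar inequality. Taking logarithms reduces it to $-\ln\kappa_1 < 5(1-\kappa_1)/4$; using $-\ln\kappa_1 \leq (1-\kappa_1)/\kappa_1$ it suffices to have $\kappa_1 > 4/5$, and the factor $5$ in the definition of $a$ (strictly larger than the boundary value $4$) is chosen precisely to secure such a margin. The main delicate point of the plan is thus twofold: first, verifying that the pairing of the exponent $4(L+r)$ in $f_{L,m}$ with the choice $b = \tau/4$ is exactly what keeps the subtracted correction bounded in $m$ rather than growing; and second, that the constants $5$ and $4$ in the statement are calibrated so that the numerical inequality $\kappa_1 > e^{-5(1-\kappa_1)/4}$ holds in the regime of interest. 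Once these two points are secured, the remainder is routine asymptotic bookkeeping.
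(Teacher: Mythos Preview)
Your overall strategy---direct asymptotic analysis of both sides after plugging in $\mu_m=\lambda(1-a/m)$ and $r_m=mb$---is exactly the paper's strategy, and your computation showing that the subtracted term in $f_{L,m}(\mu_m,r_m)$ stays bounded (because $4r_m=m\tau$ matches the exponent $m\tau$ in $\mu_m^{m\tau}$) is correct and is the key observation.

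The gap is in your last paragraph. You reduce the problem to the scalar inequality $\kappa_1>e^{-5(1-\kappa_1)/4}$ and then assert that it suffices to have $\kappa_1>4/5$, adding that ``the factor $5$ \dots\ is chosen precisely to secure such a margin''. But nothing in the paper forces $\kappa_1>4/5$. Tracing back, $\kappa_1=\rho\nu$ where $\rho$ comes from Lemma~\ref{res: separate elements} and $\nu=\rho\gamma\lambda^{-4\delta}\card{\ball{4\delta}}^{-1}$ from Proposition~\ref{res: all properties F}; these are abstract positive constants that can be arbitrarily small. For small $\kappa_1$ the inequality $\kappa_1>e^{-5(1-\kappa_1)/4}$ genuinely fails (e.g.\ at $\kappa_1=1/2$ one has $e^{-5/8}\approx 0.535>1/2$), so with the specific constants $a=5(1-\kappa_1)/\tau$, $b=\tau/4$ the desired conclusion need not hold.

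It is worth noting that the paper's own proof sidesteps this by writing $(\mu_m)^{mb}=\lambda^{mb}[1-ab+o(1)]$ and then using $1-ab<\kappa_1$, which is always true; but the correct limit of $(1-a/m)^{mb}$ is $e^{-ab}$, not $1-ab$, so the paper's argument has the same gap you uncovered. The fix for both is the same and does not affect the downstream application: replace the numerical factor $5$ in the definition of $a$ by any constant $c>4\kappa_1^{-1}\ln(1/\kappa_1)/(1-\kappa_1)$ (still depending only on $G$ and $A$), so that $e^{-ab}<\kappa_1$ is guaranteed.
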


\begin{proof}
	For every $m \in \N^*$ we put $\mu_m = \lambda\left(1- a/m\right)$.
	Note that if $m$ is sufficiently large $\mu_m \in \left(\sqrt[m]\lambda, \lambda \right)$.
	Moreover we have the following properties.
	\begin{eqnarray*}
		f_{L,m}\left( \lambda \left(1- \frac am\right), mb \right)  & = & \lambda^{mb} \left[\kappa_1 + \operatorname{o}\displaylimits_{m \rightarrow + \infty}(1) \right], \\
		(\mu_m)^{mb} & = & \lambda^{mb} \left[1-ab + \operatorname{o}\displaylimits_{m \rightarrow + \infty}(1) \right]
	\end{eqnarray*}
	Since $1-ab < \kappa_1$ there exists $m_0$ such that for every $m \geq m_0$
	\begin{displaymath}
		f_{L,m}\left( \lambda \left(1- \frac am\right), mb \right) \geq (\mu_m)^{mb}.
	\end{displaymath}
	Note that $m_0$ only depends $G$, $A$ and $L$.
\end{proof}

\begin{proof}[Proof of Proposition~\ref{res: growth F_m}]
	Let $L \geq 0$.
	We put $a = 5(1-\kappa_1)/\tau$ and $b = \tau/4$.
	According to Proposition~\ref{res: min map f}, $m_0 \in \N$ such that for all $m \geq m_0$.
	\begin{enumerate}
		\item $mb \geq 10 \delta$,
		\item $m\tau \geq 2L + mb$,
		\item $m\ln\left(\lambda\left(1 - \frac am\right)\right) > \ln \lambda$,
		\item $f_{L,m}\left( \lambda \left(1- \frac am\right), mb \right) \geq \left[\lambda \left(1- \frac am\right)\right]^{mb}$.	
	\end{enumerate}
	Let $m \geq m_0$.
	For simplicity of notation we write $\mu = \lambda\left(1- a/m\right)$ and $r = mb$.
	Hence the previous inequalities can be written $r\geq 10 \delta$, $m\tau \geq 2L+r$, $\lambda\mu^{-m} <1$ and  $f_{L,m}(\mu,r) \geq \mu^r$.	
	By Proposition~\ref{res: min F_m with quantifiers}, there exists a subset $F$ of $G$ containing 1 such that for all $s\geq 0$,
	\begin{eqnarray*}
		\lefteqn{\card{F_{L,m}\cap \ball{s+r}}}\\
		& \geq & \kappa_1\lambda^r \card{F_{L,m}\cap \ball s} -\kappa_2 \lambda^{2(L+ r)} \sum_{j\geq 1} \lambda^{j\tau}\card{F_{L,m}\cap \ball{s+r+2L-mj\tau}}.
	\end{eqnarray*}
	We now prove by induction that for all $i \in \N$,
	\begin{equation}
	\label{eqn: min map f - induction}
	\tag{$\mathcal H_i$}
		\card{F_{L,m}\cap \ball{ir}} \geq \mu^r \card{F_{L,m}\cap{\ball {(i-1)r}}}.
	\end{equation}
	$(\mathcal H_0)$ is obviously true.
	Assume the the induction hypotheses holds for every integer smaller or equal to $i$.
	In particular, for all $t \geq 0$ we have
	\begin{displaymath}
		\card{F_{L,m}\cap{\ball {ir-t}}}
		\leq \mu^{-\left\lfloor \frac tr \right\rfloor r}\card{F_{L,m}\cap\ball {ir}} 
		\leq \mu^{r-t}\card{F_{L,m}\cap{\ball {ir}}}.
	\end{displaymath}
	By construction of $m_0$, for all $j\geq 1$, we have $mj\tau - 2L - r  \geq 0$, thus
	\begin{eqnarray*}
		\lambda^{j\tau}\card{F_{L,m}\cap \ball{ir+r+2L-mj\tau}} 
		& \leq & \left(\frac \lambda{\mu^m}\right)^{j\tau} \mu^{2(r+L)}\card{F_{L,m}\cap\ball {ir}} \\
		& \leq & \left(\frac \lambda{\mu^m}\right)^{j\tau} \lambda^{2(r+L)} \card{F_{L,m}\cap\ball {ir}}.
	\end{eqnarray*}
	Note that $m_0$ has been chosen in such a way that $\lambda\mu^{-m} <1$.
	Hence by summing these inequalities we obtain,
	\begin{eqnarray*}
		\card{F_{L,m}\cap \ball{(i+1)r}}
		& \geq & \left[ \kappa_1\lambda^r  -\kappa_2 \lambda^{4(L+r)} \sum_{j\geq 1}\left(\frac \lambda{\mu^m}\right)^{j\tau} \right]  \card{F_{L,m}\cap\ball {ir}}\\
		& \geq & f_{L,m}(\mu, r)  \card{F_{L,m}\cap\ball {ir}}.
	\end{eqnarray*}
	However, by construction, $f_{L,m}(\mu,r) \geq \mu^r$.
	Consequently $(\mathcal H_{i+1})$ holds.
	A second induction shows that for all $i \in \N$, $\card{F_{L,m}\cap \ball{ir}}\geq \mu^{ir}$.
	Therefore the growth rate of $F_{L,m}$ is larger than $\mu =  \lambda \left(1- a/m\right)$.
\end{proof}

\section{Growth of periodic quotients}
\label{sec: periodic quotients}

The infiniteness of the free Burnside groups is a consequence of the following result.
We will use it to estimate the growth rate of $\burn kn$.

\begin{theo}[S.I.~Adian {\cite{Adi79}}]
\label{res: aperiodic one-to-one - free case}
	Let $k \geq 2$.
	Let $A$ be a free generating set of $\free k$.
	There exist integers $n_0$ and $\eta$ such that for every odd exponent $n \geq n_0$ the following holds.
	Let $w$ be a reduced word over $A \cup A^{-1}$. 
	If $w$ does not contain a subword of the form $u^{\lfloor n/2\rfloor - \eta}$ then $w$ represents a non-trivial element of $\burn kn$.
\end{theo}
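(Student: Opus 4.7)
The plan is to build $\burn kn$ as the direct limit of a tower of intermediate quotients $G_0 \to G_1 \to G_2 \to \cdots$ obtained by successively imposing the relations $a_i^n = 1$, and to show by induction that any reduced word $w$ without a long periodic subword survives every quotient step; this is the overall strategy of Adian's monograph \cite{Adi79}.

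Concretely, first enumerate the cyclically reduced elements of $\free k$ by non-decreasing length as $a_1, a_2, \ldots$, set $G_0 = \free k$, and define $G_{i+1}$ to be the quotient of $G_i$ by the normal closure of the image of $a_{i+1}^n$ (provided that image still has infinite order; otherwise set $G_{i+1} = G_i$). Then $\burn kn$ is the direct limit of this tower. The heart of the argument is a simultaneous induction on $i$ yielding a graded small-cancellation and normal-form theorem for each $G_i$.

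The only consequence of that induction which is actually needed for this statement is Greendlinger-type: if a reduced word $w$ becomes trivial in $G_{i+1}$ but is non-trivial in $G_i$, then a minimal Van Kampen diagram for $w$ over the relator family $\{\bar a_j^n : j \leq i+1\}$ must contain a disc whose boundary is a cyclic rotation of $\bar a_{i+1}^n$ and which is identified with $w$ along an arc of length at least $(\lfloor n/2\rfloor - \eta)|a_{i+1}|$. In particular, $w$ contains a subword of the form $u^{\lfloor n/2 \rfloor - \eta}$ with $u$ a cyclic conjugate of $a_{i+1}$. The constant $\eta$ is the universal bound, guaranteed by the graded small-cancellation condition, on how much of a single relator disc's boundary can be absorbed into cancellation with neighbouring discs. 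Given this lemma, the theorem follows at once: if $w$ contains no subword of the form $u^{\lfloor n/2\rfloor - \eta}$, then $w$ cannot die at any stage of the tower, and hence $w$ survives in the limit $\burn kn$.

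The main obstacle, and the bulk of Adian's work, lies precisely in sustaining the small-cancellation invariant through the induction: one must control how relator discs of different ranks interact, and how much cancellation can accumulate, so that the small-cancellation hypothesis is preserved when the new family of relators $\{\bar a_{i+1}^n\}$ is added. This is what forces the threshold $n_0$ to be large: the small-cancellation parameter must be kept uniformly small across all ranks, which happens only once $n$ is sufficiently large.
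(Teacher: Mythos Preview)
The paper does not prove this theorem at all: it is stated with the attribution ``[S.I.~Adian \cite{Adi79}]'' and used as a black box to deduce the growth estimate for $\burn kn$ in the theorem immediately following it. So there is no ``paper's own proof'' to compare against; the author simply imports the result from Adian's monograph.

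Your sketch is a reasonable high-level outline of how such a statement is established, but a couple of remarks are in order. First, the presentation you give --- towers of quotients, Van Kampen diagrams, relator discs, graded small cancellation --- is really Ol'shanski\u\i's diagrammatic reformulation rather than Adian's original method, which is a purely combinatorial classification of periodic and aperiodic words with an intricate simultaneous induction on ``rank'' and no diagrams at all. The two frameworks are morally equivalent and yield the same conclusion, but attributing the diagrammatic argument to \cite{Adi79} is historically inaccurate. Second, your sketch correctly identifies that the entire difficulty is hidden in maintaining the small-cancellation (or, in Adian's language, the periodicity) invariants through the induction; as written, your proposal is an honest summary of the \emph{architecture} of the proof but not a proof itself --- which is appropriate, since the full argument occupies an entire book. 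For the purposes of this paper, simply citing \cite{Adi79} (as the author does) is the right move.
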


\begin{theo}
	Let $k \geq 2$.
	Let $A$ be a free generating set of $\free k$.
	There exist $\kappa >0$ and an integer $n_0$ such that for every odd exponent $n \geq n_0$ the exponential growth rate of $\burn kn$ with respect to the image of $A$ is larger than
	\begin{displaymath}
		(2k-1)\left( 1 - \frac \kappa{(2k-1)^{n/2}} \right)
	\end{displaymath}
\end{theo}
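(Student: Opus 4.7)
The plan is to push Proposition~\ref{res: growth F_m - free case} through Adian's theorem. Fix an odd integer $n \geq n_0$ and set $m := \lfloor n/2 \rfloor - \eta$, where $n_0$ and $\eta$ come from Theorem~\ref{res: aperiodic one-to-one - free case}. The goal is to bound from below the number of distinct elements of $\burn kn$ of length at most $r$ by counting $m$-aperiodic reduced words in $\free k$ of length at most $r$.

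First, I would establish that the natural projection $\free k \twoheadrightarrow \burn kn$ restricts to an \emph{injection} on $F_m$. This strengthens Theorem~\ref{res: aperiodic one-to-one - free case}: not only is each $m$-aperiodic reduced word nontrivial modulo $\free k^n$, but distinct such words remain distinct in $\burn kn$. Given two distinct $v_1, v_2 \in F_m$, write $v_i = v_i' c$ where $c$ is their longest common suffix; the reduced form of $v_1 v_2^{-1}$ is then $w := v_1'(v_2')^{-1}$, a concatenation of two $m$-aperiodic reduced words with no boundary cancellation. Adian's sharper normal-form analysis in \cite{Adi79} ensures that $w$ still avoids powers of exponent $\lfloor n/2 \rfloor - \eta$, so that Theorem~\ref{res: aperiodic one-to-one - free case} applies and $w \neq 1$ in $\burn kn$.

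From injectivity, every ball of radius $r$ in $\burn kn$ contains at least $\card{F_m \cap \ball r}$ elements, so the growth rate of $\burn kn$ with respect to the image of $A$ is bounded below by that of $F_m$. Fix any $a > 2k$, for instance $a = 2k+1$. Proposition~\ref{res: growth F_m - free case} produces an $m_0$ such that for every $m \geq m_0$ the growth rate of $F_m$ exceeds $\lambda(1 - a\lambda^{-m})$ with $\lambda = 2k-1$. For $n$ large enough, $m = \lfloor n/2 \rfloor - \eta \geq m_0$; since $n$ is odd, $\lfloor n/2 \rfloor = (n-1)/2$, and a direct computation gives
\[
\lambda^{-m} = (2k-1)^{\eta - (n-1)/2} = (2k-1)^{\eta + 1/2} \cdot (2k-1)^{-n/2}.
\]
Taking $\kappa := a(2k-1)^{\eta + 1/2}$ yields the advertised bound $\lambda\bigl(1 - \kappa(2k-1)^{-n/2}\bigr)$.

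The main obstacle is the first step: upgrading Theorem~\ref{res: aperiodic one-to-one - free case} from nontriviality to injectivity of $F_m \hookrightarrow \burn kn$. A naive boundary analysis of $w = v_1'(v_2')^{-1}$ only excludes $2m$-th powers straddling the join (an $m$-th power could split into two half-length pieces on either side), which costs a factor of two in $m$ and would degrade the final bound to $(2k-1)^{-n/4}$; avoiding this loss forces one to invoke the finer analysis in Adian's book, where $F_m$ is known to embed into $\burn kn$ for $m$ as large as $\lfloor n/2 \rfloor - \eta$.
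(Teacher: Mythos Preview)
Your proposal is correct and follows essentially the same route as the paper: set $m = \lfloor n/2 \rfloor - \eta$, invoke Adian to embed $F_m$ into $\burn kn$, apply Proposition~\ref{res: growth F_m - free case}, and absorb the factor $(2k-1)^{\eta+1/2}$ into $\kappa$. You are in fact more careful than the paper about the passage from Theorem~\ref{res: aperiodic one-to-one - free case} (nontriviality) to the injectivity actually needed; the paper simply asserts the embedding as a consequence of Adian's work, and your final paragraph correctly identifies that this requires the stronger statement from \cite{Adi79} rather than a naive boundary argument.
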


\begin{proof}
	The constants $n_0$ and $\eta$ are the ones given by Theorem~\ref{res: aperiodic one-to-one - free case}.
	We fix $a > 2k$.
	According to Proposition~\ref{res: growth F_m - free case}, there exists an integer $m_0$  such that for every $m \geq m_0$ the exponential growth rate of the set of $m$-aperiodic words is larger than 
	\begin{displaymath}
		(2k-1)\left( 1 - \frac a{(2k-1)^m} \right).
	\end{displaymath}
	Let $n \geq \max\left\{n_0,2m_0 +2\eta+2\right\}$ be an odd integer.
	By Theorem~\ref{res: aperiodic one-to-one - free case}, the natural map $\free k \rightarrow \burn kn$ restricted to the set of $\left(\lfloor n/2 \rfloor- \eta\right)$-aperiodic elements is one-to-one.
	Therefore the growth rate of $\burn kn$ is larger than the one of this set. 
	In particular it is larger than 
	\begin{displaymath}
		(2k-1)\left( 1 - \frac {a(2k-1)^{\eta+1}}{(2k-1)^{n/2}} \right),
	\end{displaymath}
	which ends the proof.
\end{proof}

\paragraph{}
In \cite{Olc91}, A.Y.~Ol'shanski\u\i\ solved the Burnside problem for hyperbolic groups.

\begin{theo}[Ol'shanski\u\i\ {\cite{Olc91}}]
	Let $G$ be a non-elementary, torsion-free hyperbolic group.
	There exists an integer $n(G)$ such that for all odd exponents $n \geq n(G)$ the quotient $G/G^n$ is infinite.
\end{theo}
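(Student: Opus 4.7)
The plan is to derive this theorem from two ingredients already in place (or easily deducible from the techniques recalled so far). The first is the embedding result underlying Ol'shanski's argument~\cite{Olc91} (sketched in the introduction of this paper): there exist constants $L$, $\epsilon$ and $n_1$, depending only on $G$, such that for every odd integer $n \geq n_1$ and every integer $m \leq \epsilon n$, the natural projection $G \to G/G^n$ is injective on the set of $(L,m)$-aperiodic elements of $G$. An alternative proof, based on Delzant--Gromov rotation families, can be found in \cite{DelGro08,Cou11b}.

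The second ingredient is Proposition~\ref{res: growth F_m}. Applied to the constant $L$ above, it produces an integer $m_0$ such that for every $m \geq m_0$ there is a subset $F \subset G$ whose $(L,m)$-aperiodic subset $F_{L,m}$ has exponential growth rate at least $\lambda(1 - a/m)$, where $a>0$ depends only on $G$ and $A$. Since $G$ is non-elementary, $\lambda > 1$; by enlarging $m_0$ if necessary we may assume $\lambda(1 - a/m) > 1$ for every $m \geq m_0$, so that $F_{L,m}$ is in particular infinite.

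To combine these, set $n(G) := \max\{n_1,\,\lceil m_0/\epsilon\rceil\}$. For every odd $n \geq n(G)$, put $m := \lfloor \epsilon n\rfloor$; then $m_0 \leq m \leq \epsilon n$, so Proposition~\ref{res: growth F_m} provides an infinite set $F_{L,m}$ of $(L,m)$-aperiodic elements on which the projection $G \to G/G^n$ is injective. Consequently $G/G^n$ contains infinitely many distinct elements, and the theorem follows.

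The real obstacle is the embedding step: this is the heart of Ol'shanski's theorem and requires the full weight of graded small cancellation theory over hyperbolic groups (or the Delzant--Gromov rotation machinery). By contrast, the infiniteness of $F_{L,m}$ is here only a very weak corollary of the quantitative growth estimate of Section~\ref{sec:avoiding large powers}; indeed, the same input will yield Theorem~\ref{res: main theorem} once one notes that the image of $F_{L,m}$ in $G/G^n$ has exponential growth rate at least that of $F_{L,m}$ itself.
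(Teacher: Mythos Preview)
Your proposal is correct and follows exactly the route the paper sketches: the paper does not give an independent proof of this theorem but simply records that Ol'shanski\u\i's argument rests on the injectivity of $G \twoheadrightarrow G/G^n$ restricted to the (infinite) set of $(L,m)$-aperiodic elements, and then states this injectivity as Theorem~\ref{res: aperiodic one-to-one}. You supply the same two ingredients, using Proposition~\ref{res: growth F_m} to certify that $F_{L,m}$ is infinite; this is a slight overkill compared with Ol'shanski\u\i's original proof (which only needs infiniteness, not a growth estimate), but within this paper it is the natural way to close the argument, and your final paragraph correctly identifies where the real work lies.
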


The proof relies on the following fact. 
If $n$ is large enough, then the restriction of the canonical projection $G \twoheadrightarrow G/G^n$ to a set of aperiodic elements (which is infinite) is injective.
More precisely he showed the following statement.

\begin{theo}[Ol'shanski\u\i\ {\cite{Olc91}}]
\label{res: aperiodic one-to-one}
	Let $G$ be a non-elementary, torsion-free hyperbolic group.
	There exist constants $L$, $\epsilon$ and $n(G)$ with the following property.
	Let $n \geq n(G)$ be an odd integer and $m \leq \epsilon n$.
	Then the restriction of $G \twoheadrightarrow G/G^n$ to the set of $(L, m)$-aperiodic elements is one-to-one.
\end{theo}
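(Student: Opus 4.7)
The plan is to realize $G/G^n$ as a direct limit of hyperbolic quotients $G = G_0 \twoheadrightarrow G_1 \twoheadrightarrow G_2 \twoheadrightarrow \cdots$, each obtained by killing the $n$-th power of a representative of a single conjugacy class, and to show that $(L,m)$-aperiodic elements cannot become trivial at any stage. Following Ol'shanski\u\i's graded small cancellation framework (an alternative via rotating families of Delzant-Gromov appears in \cite{DelGro08,Cou11b}), enumerate the hyperbolic conjugacy classes of $G$ by increasing stable translation length. At stage $i$ choose a cyclically reduced representative $w_i$ and set $G_{i+1} = G_i / \langle\langle w_i^n \rangle\rangle$. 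The quotient $G/G^n$ is the direct limit of this tower.

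The first technical step is to preserve hyperbolicity through the induction with uniform control on the constants. One shows that if $n$ is odd and sufficiently large, and $|w_i|$ is large enough relative to the hyperbolicity constant $\delta_i$ of $G_i$, then the family $\{w_i^n\}$ satisfies a graded small cancellation condition: any common piece between a cyclic conjugate of some $w_i^n$ and any $w_j^n$ has length at most a small fraction of $\min(|w_i^n|,|w_j^n|)$. This condition, together with the hyperbolic version of the Cartan-Hadamard theorem, forces $G_{i+1}$ to be hyperbolic with a hyperbolicity constant $\delta_{i+1}$ comparable to $\delta_i$, and it forces the projection $G_i \twoheadrightarrow G_{i+1}$ to be injective on balls of radius much larger than $|w_i|$.

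The core argument is then a van Kampen diagram analysis. Suppose $g \in G$ is $(L,m)$-aperiodic and its image in $G/G^n$ is trivial; let $i$ be the smallest index at which the image of $g$ in $G_i$ becomes trivial. A geodesic reduced van Kampen diagram $D$ over the relative presentation of $G_i$ with respect to $G_{i-1}$, whose boundary represents $g$, must contain at least one face labelled by $w_{i-1}^n$. The graded Greendlinger lemma produces a face whose boundary shares with $\partial D$ an arc of length at least $(1-\epsilon')|w_{i-1}^n|$ for some small $\epsilon'$. Lifting this arc back to $X$, a geodesic representative $\sigma_g$ of $g$ in $G$ contains a subsegment that fellow-travels, within some distance $L$ depending on the accumulated hyperbolicity constants, a path labelled by a cyclic conjugate of $w_{i-1}^{(1-\epsilon')n}$. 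Choosing $\epsilon$ with $(1-\epsilon')n \geq \epsilon n$, this exhibits $g$ as containing an $(L,m)$-power for any $m \leq \epsilon n$, contradicting aperiodicity.

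The main obstacle, and the heart of Ol'shanski\u\i's argument, is the simultaneous uniform control of both the hyperbolicity constants $\delta_i$ and the graded small cancellation parameters across the entire tower $(G_i)$: one must verify that a single pair $(L,\epsilon)$ works at every stage, independently of $n$ and of $i$. This calls for an inductive bootstrap in which the length of each new relator $w_i^n$ is chosen to be at least a fixed multiple of $\delta_i$, ensuring that the small cancellation condition is preserved and that the constants $\delta_i$ do not blow up. Once this quantitative control is established, the Greendlinger estimate and the translation-back step above are essentially formal.
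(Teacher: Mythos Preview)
The paper does not supply its own proof of this theorem: it is quoted from Ol'shanski\u\i\ \cite{Olc91} and used as a black box (the only surrounding text is a remark reconciling the two definitions of $(L,m)$-aperiodic). So there is no ``paper's proof'' to compare against; what you have written is a sketch of Ol'shanski\u\i's original argument.

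As an outline of that argument your sketch is broadly correct, and you rightly isolate the crux: obtaining a single pair $(L,\epsilon)$ that works uniformly along the whole tower. Two points, however, are misstated and would derail an actual proof. First, the injectivity radius claim is backwards: small cancellation over a hyperbolic group gives that $G_i \twoheadrightarrow G_{i+1}$ is injective on balls whose radius is a \emph{fixed fraction} of $|w_i^n|$, not on balls ``much larger than $|w_i|$''. This is exactly what lets you lift the Greendlinger arc from $G_{i-1}$ back through the tower to $G$, one step at a time, provided the relator lengths grow in a controlled geometric fashion. Second, the enumeration is not done in $G$ once and for all: at each stage $i$ one must select, in $G_i$ with its own metric, those conjugacy classes whose translation length lies in a prescribed window relative to $\delta_i$; elements that were long in $G$ may become short (or elliptic) in $G_i$, and conversely new periodic elements may appear. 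Getting this bookkeeping right is precisely what makes the ``inductive bootstrap'' you allude to delicate, and it is where Ol'shanski\u\i's paper (or the Delzant--Gromov alternative in \cite{DelGro08,Cou11b}) does the real work.
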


\rems The definition of aperiodic words used by A.Y.~Ol'shanski\u\i\ is slightly different from ours (see Section~\ref{sec:avoiding large powers}).
He says that an element $g \in G$ contains a $(L, m)$-power if there is $(p,w) \in G\times C$ such that both $p$ and $pw^n$ belong to the $L$-neighbourhood of some geodesic between 1 and $g$ (not necessary $\sigma_g$).
However in a hyperbolic space, two geodesics joining the same extremities are $4\delta$-close one from the other.
Hence a $(L, m)$-aperiodic element in the sense of Ol'shanski\u\i\ is $(L +4\delta, m)$-aperiodic in our sense and conversely.
Therefore the statement of Theorem~\ref{res: aperiodic one-to-one} with one or the other definition are equivalent.
An other approach based on the work of T.~Delzant and M.~Gromov \cite{DelGro08} can be found in \cite{Cou11b}.

\begin{theo}
	Let $G$ be a non-elementary, torsion-free hyperbolic group and $\lambda$ its exponential growth rate with respect to a finite generating set $A$ of $G$.
	There exists a positive number $\kappa$ such that for sufficiently large odd exponents $n$ the exponential growth rate of $G/G^n$ with respect to $A$ is larger than $\lambda \left(1-  \kappa/n\right)$.
\end{theo}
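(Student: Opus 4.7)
The plan is to combine Proposition~\ref{res: growth F_m} with Ol'shanski\u\i's injectivity result (Theorem~\ref{res: aperiodic one-to-one}) by choosing the power $m$ to grow linearly with $n$. The constants $L$ and $\epsilon$ are fixed once and for all by Theorem~\ref{res: aperiodic one-to-one}: for every odd $n \geq n(G)$ and every integer $m \leq \epsilon n$, the projection $G \twoheadrightarrow G/G^n$ is injective on the set of $(L,m)$-aperiodic elements. I would feed this value of $L$ into Proposition~\ref{res: growth F_m} to obtain the accompanying constants $a > 0$ and $m_0 \in \N$: for each $m \geq m_0$, there is a subset $F \subset G$ such that the growth rate of $F_{L,m}$ is at least $\lambda(1 - a/m)$.

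Next I would pick the exponent. Given an odd integer $n$, set $m = \lfloor \epsilon n \rfloor$. For $n$ large enough (say $n \geq \max\{n(G),\, (m_0+1)/\epsilon\}$) this value satisfies both $m \geq m_0$ and $m \leq \epsilon n$, so both hypotheses are simultaneously met. Then the restriction of $G \twoheadrightarrow G/G^n$ to $F_{L,m}$ is injective, and therefore
\begin{displaymath}
    \card{(G/G^n) \cap \ball s} \geq \card{F_{L,m} \cap \ball s}
\end{displaymath}
for every $s$ (taking balls on the quotient side with respect to the image of $A$). Taking exponential growth rates, the growth rate of $G/G^n$ is at least $\lambda(1 - a/m)$.

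Finally I would convert the lower bound from a bound in $m$ to one in $n$. Since $m = \lfloor \epsilon n \rfloor \geq \epsilon n - 1 \geq \epsilon n/2$ for $n$ sufficiently large, we have $a/m \leq 2a/(\epsilon n)$, hence
\begin{displaymath}
    \lambda\left(1 - \frac{a}{m}\right) \geq \lambda\left(1 - \frac{\kappa}{n}\right)
\end{displaymath}
for $\kappa = 2a/\epsilon$, and $\kappa$ depends only on $G$ and $A$ (through $L$, $\epsilon$, and $a$). This yields the claimed estimate for all sufficiently large odd $n$.

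The substantive work has already been done in Proposition~\ref{res: growth F_m} and in Ol'shanski\u\i's Theorem~\ref{res: aperiodic one-to-one}; no genuine obstacle remains here. The only point requiring care is to verify the compatibility of the two ranges of validity (the threshold $m_0$ from the growth estimate versus the linear bound $m \leq \epsilon n$ from injectivity), which is handled simply by taking $n$ large enough that $\lfloor \epsilon n \rfloor$ lies in both ranges, and then tracking how the final constant $\kappa$ is expressed in terms of $a$ and $\epsilon$.
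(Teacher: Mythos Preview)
Your proposal is correct and follows essentially the same route as the paper: fix $L,\epsilon,n(G)$ from Theorem~\ref{res: aperiodic one-to-one}, feed $L$ into Proposition~\ref{res: growth F_m} to obtain $a,m_0$, set $m=\lfloor \epsilon n\rfloor$ for large odd $n$, and use injectivity of the projection on $F_{L,m}$ together with $m\geq \epsilon n/2$ to conclude with $\kappa=2a/\epsilon$. The only detail you leave implicit (and the paper makes explicit) is that word length does not increase under the quotient map, which is what justifies $\card{B_{G/G^n}(s)}\geq \card{F_{L,m}\cap \ball s}$.
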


\begin{proof}
	The parameters $L$, $\epsilon$ and $n(G)$ are given by Theorem~\ref{res: aperiodic one-to-one}.
	The constants $a$ and $m_0$ (which only depend on $G$ and $L$) are then given by Proposition~\ref{res: growth F_m}.
	Let $n \geq \left\{\epsilon^{-1}( m_0 +1),  n(G) , 2\epsilon^{-1} \right\}$ be an odd integer.
	We put $m = \lfloor \epsilon n \rfloor$.
	According to Proposition~\ref{res: growth F_m}, there exists a subset $F$ of $G$ such that the exponential growth rate of $F_{L,m}$ is larger than $\lambda \left(1-  a/m\right) \geq \lambda \left(1-  {2a}/{\epsilon n}\right)$.
	By Theorem~\ref{res: aperiodic one-to-one} the restriction of $G \twoheadrightarrow G/G^n$ to $F_{L,m}$ is one-to-one.
	On the other hand for every $g \in G$ the length of $g$ with respect to $A$ is larger than the length of its image in $G/G^n$ with respect to the image of $A$.
	Therefore for all $r \geq 0$, the ball of radius $r$ in $G/G^n$ contains at least $\card{F_{L,m} \cap \ball r}$ elements.
	Thus the exponential growth rate of $G/G^n$ is larger than the one of $F_{L,m}$.
	In particular it is larger than $\lambda \left(1-  {2a}/{\epsilon n}\right)$.
\end{proof}

\makebiblio

\vspace*{1cm}
\noindent %
R\'emi Coulon \\
Vanderbilt University \\
Department of Mathematics \\
remi.coulon@vanderbilt.edu \\

\end{document}